\numberwithin{equation}{section}
\newcommand{\supp}{\operatorname{supp}}
\newcommand{\opm}{\operatorname{op}}
\begin{document}
\newtheorem{assumption}{Assumption}
\newtheorem{proposition}{Proposition}
\newtheorem{definition}{Definition}
\newtheorem{lemma}{Lemma}
\newtheorem{theorem}{Theorem}
\newtheorem{observation}{Observation}
\newtheorem{remark}{Remark}
\newtheorem{corollary}{Corollary}

\title{Explicit Green operators for quantum mechanical Hamiltonians.~I.~The hydrogen atom}

\author{Heinz-J{\"u}rgen Flad$^\dag$, Gohar Harutyunyan$^\ast$,
 Reinhold Schneider$^\dag$ and Bert-Wolfgang Schulze$^\ddag$ \\
\ \\
$^\dag${\small Institut f\"ur Mathematik, Technische Universit\"at Berlin,
Stra{\ss}e des 17. Juni 136, D-10623 Berlin}\\
$^\ast${\small Institut f\"ur Mathematik, Carl von Ossietzky Universit\"at
Oldenburg, D-26111 Oldenburg}\\
$^\ddag${\small Institut f\"ur Mathematik, Universit\"at Potsdam, Am Neuen Palais 10, D-14469 Potsdam}
}
\maketitle

\begin{abstract}
\noindent
We study a new approach to determine the asymptotic behaviour of quantum many-particle systems
near coalescence points of particles which interact via singular Coulomb potentials.
This problem is of fundamental interest in electronic structure theory in order to establish accurate
and efficient models for numerical simulations. Within our approach, coalescence points of particles
are treated as embedded geometric singularities in the configuration space of electrons.
Based on a general singular pseudo-differential calculus, we provide a recursive scheme
for the calculation of the parametrix and corresponding Green operator of a nonrelativistic
Hamiltonian. In our singular calculus, the Green operator encodes all the asymptotic information of the eigenfunctions.
Explicit calculations and an asymptotic representation for the Green operator of the hydrogen atom
and isoelectronic ions are presented.
\end{abstract}

\section{Introduction}
The asymptotic behaviour of eigenfunctions of nonrelativistic quantum mechanical Hamiltonians near singular points
of the potential is of great physical interest and paramount importance for the numerical solution of these eigenvalue problems.
This is especially true in the field of electronic structure calculations where electrons and
nuclei interact via singular Coulomb potentials with each other. We present a new approach to extract asymptotic
information by an explicit recursive calculation of the parametrix of the Hamiltonian and corresponding
Green operator. Such an approach, however, requires the full machinery of pseudo-differential
calculus on manifolds with singularities. The Coulomb potentials in the Hamiltonian induce a hierarchy of
embedded singularities into the configuration space of the electrons, i.e., $\mathbb{R}^{3N}$ for a system
of $N$ electrons within the Born-Oppenheimer approximation. This means that we have to consider
$\mathbb{R}^{3N}$ as a stratified space with conical, edge and higher order corner singularities.
The underlying general calculus of pseudo-differential operator algebras and corresponding function spaces has been
developed by one of the authors in great generality, cf.~the monographs \cite{ES97,Schulze98}.
It has been already applied to various types of boundary value problems with applications in material
science, i.e., crack theory, cf.~the joint monograph \cite{HS08} by two of the authors.
Applications to quantum theory are natural from a conceptual point of view, however,
they face considerable technical difficulties with regard to the tremendous complexity
of this calculus. Especially for higher order corner singularities it is still under development.
It is therefore the purpose of the present work to provide an explicit calculation
along the line of the general approach, which illustrates the basic idea and its potentiality.

The paper is organized as follows: first we briefly summarize previous work on the electronic
Schr\"odinger equation, focusing on the asymptotic behaviour near singularities of the potential.
In the subsequent section, we present a general outline of the basic ideas of the calculus thereby
avoiding most of the technicalities. In order to make the paper reasonably self-contained, we present
basic elements of the calculus in Appendix \ref{AppendixB}.
The main part of the paper is devoted to explicit
calculations for the hydrogen atom and isoelectronic ions, where we discuss the recursive construction
of the parametrix which enables us to extract the asymptotic information to any order.

\subsection{Previous work on the asymptotics of many-electron systems}
\label{summary}
The rigorous mathematical analysis of the asymptotic behaviour of eigenfunctions of nonrelativistic
Hamiltonians near singularities of the potential was initiated by Kato \cite{Kato57}.
Presently a considerably more detailed picture is available by the work
of M.~and T.~Hoffmann-Ostenhof and coworkers \cite{HOS81,HO292,HO2S94,HO2S01,FHO2S05,FHO2S09}.
Their work provides some leading order terms of asymptotic expansions and presents a source
of inspiration for numerical methods \cite{KMTV06}, which try to incorporate such terms explicitly into the
basis set. For certain purposes, however, like nonlinear approximation theory \cite{FHS07}, a more stringent
control of the singular behaviour is required. In this particular case, these results only provide
some guideline to conjecture the more general asymptotic behaviour.

There are only a few explicitly solvable electronic Coulomb systems mentioned in the literature.
The best known example is the hydrogen atom dicussed below. For systems with more than one electron, it is
only for the helium atom where an explicit solution has been conjectured by Fock \cite{Fock54}.
Despite important rigorous results in favour of Fock's expansion by Morgan \cite{Morg86}, a complete proof
is still missing. However, separating the leading order terms of the Fock expansion
enabled improved regularity results for the remaining part of general many-electron wavefunctions, cf.~\cite{FHO2S05}.
Unfortunately this approach is presently limited to leading order terms because spin-degrees of freedom
are not taken into account.

The conical singularity that occurs when an electron approaches a nucleus, which is considered
to be fixed within the Born-Oppenheimer approximation, is the simplest case in the hierarchy of singularities
discussed before. It is well-known that the eigenfunctions of the hydrogen atom possess a simple
Taylor asymptotic with respect to the electron-nuclear distance. Such an asymptotic behaviour has been
recently established for the nonlinear Hartree-Fock model \cite{FSS08} using tools from a singular pseudo-differential
operator calculus. This result provides, e.g., a solid ground for the application of nonlinear approximation theory
to singular particle models \cite{FHS06}.

\subsection{Fuchs type differential operators and their parametrices}
Due to space restrictions, we cannot provide a detailed account of the operator calculus on manifolds
with conical singularities. For the convenience of the reader we have summarized
some basic features of the calculus in Appendix \ref{AppendixB}. It is the purpose of this section
to give a brief outline of the main ideas underlying the present work.

The cone algebra over an infinite stretched cone $X^\wedge = \mathbb{R}_+ \times X$ for a closed compact
$C^\infty$ manifold X may be motivated by the task to establish a (pseudo-differential) operator calculus
that contains all Fuchs type cone degenerate differential operators of the form
\begin{equation}
 A = r^{-\mu} \sum\limits_{j=0}^\mu a_j(r) \biggl( -r \frac{\partial }{\partial  r} \biggr)^j 
\label{Amu}
\end{equation}
for $a_j \in C^\infty \bigl( \overline{\mathbb{R}}_+, \mbox{Diff}^{\mu-j}(X) \bigr)$, together with the parametrices
of elliptic elements. Here $\mbox{Diff}^\nu(X)$ denotes the space of differential operators on $X$ of
order $\nu$ with smooth coefficients in local coordinates. In such a context it is natural to admit
$\mu \in \mathbb{N}$ rather than $\mu=2$ which is the case for our applications,
since compositions and parametrices give rise to other orders.
Under some natural growth conditions on the coefficients $a_j(r)$ for $r \rightarrow \infty$ the
operator $A$ induces continuous operators between weighted Sobolev spaces
\[
 A: {\cal K}^{s,\gamma}(X^\wedge) \longrightarrow {\cal K}^{s-2,\gamma-2}(X^\wedge) ,
\]
for all $s,\gamma \in \mathbb{R}$. A brief discussion of these spaces can be found in Appendix \ref{Mellin}.
For a detailed account, we refer to the monographs \cite{ES97,Schulze98}.

Our approach to characterise asymptotics of eigenfunctions of cone degenerate operators (\ref{Amu})
is based on a singular calculus of reasonable generality
which means that it can be applied to a large class of problems where the present one is only a specific example.
Hereby it is a basic task to understand the structure of parametrices.
In general, asymptotics, together with weight information and smoothness of solutions,
may be expected from a parametrix $P$ of the respective elliptic operator $A$.
The left product $PA$ is equal to the identity plus a smoothing operator $G$, where the latter contains all the
information provided that $P$ is known in detail close to the singularity.
Within the present work, we restrict to the case of conical singularities with smooth base manifolds.
More general edge and corner singularities will be discussed in a subsequent publication.

In order to deal with parametrices it is necessary to consider a
wider class of pseudo-differential operators. The corresponding
symbols, e.g., for Fuchs type differential operators (\ref{Amu}),
are (up to the factor $r^{-\mu}$) locally of the form
\[
 p_A(r,x,\rho,\xi) \, = \, \tilde{p}(r,x,r\rho,\xi) ,
\]
where $\tilde{p}(r,x,\rho, \xi)$ is a classical pseudo-differential symbol in the covariables $(\rho, \xi)$,
smoothly depending on $(r,x)$ up to $r=0$. The Fuchs type symbol $p_A(r,x,\rho,\xi)$ is degenerate
for $r \rightarrow 0$ with respect to the covariable $\rho$.
Globally along the base $X$ of a cone one considers operator functions
$p(r,\rho) \, = \, \tilde{p}(r, r\rho)$
where $\tilde{p}(r, \rho)$ is a parameter-dependent family of classical pseudo-differential operators on $X$ with parameter
$\rho \in {\mathbb R}$, smoothly depending on $r \in \overline{\mathbb{R}}_+$, i.e.,
$\tilde{p}(r, \rho) \in C^{\infty} (\overline{\mathbb{R}}_+, L^{\mu}_{cl}(X; \mathbb{R}_{{\rho}}))$.

Elliptic Fuchs type differential operators (\ref{Amu}) can be alternatively expressed as Mellin pseudo-differential
operators along ${\mathbb R}_+$ with holomorphic Mellin symbols $h(r,z)$ taking values in (classical pseudo-)
differential operators on $X$. This admits the construction of a parametrix with symbol $h^{(-1)}(r,z)$ representing
the Mellin-Leibniz inverse of $h(r,z)$ with meromorphic Mellin symbol which has poles in the complex $z$-plane.
The poles and their multiplicities just determine the asymptotics of eigenfunctions near $r = 0$.

\section{The hydrogen atom revisited}
In nonrelativistic quantum theory, the hydrogen atom represents the most prominent textbook example
for a quantum mechanical Hamiltonian where all eigenfunctions belonging to discrete eigenvalues
are explicitly known. The stationary Schr\"odinger equation for the hydrogen atom and isoelectronic ions
has the form
\begin{equation}
 \bigl( H -E \bigr) u =0 ,
\label{Schroedinger}
\end{equation}
with Hamiltonian operator
\begin{equation}
 H = -\frac{1}{2} \Delta -\frac{Z}{|\tilde{x}|} 
\label{Hamiltonian}
\end{equation}
in atomic units, where $\Delta$ is the Laplacian in $\mathbb{R}^3 \ni \tilde{x}$ and $Z$
the charge of the nucleus. In polar coordinates, i.e.,
$(S^2)^{\wedge} := \mathbb{R}_+ \times S^2 \cong \mathbb{R}^3_{\tilde{x}} \setminus \{0\}$,
equation (\ref{Schroedinger}) becomes
\begin{equation}
 A u = \frac{1}{r^2} \biggl[ \frac{1}{2} \biggl(-r \frac{\partial}{\partial r} \biggr)^2 - \frac{1}{2}
 \biggl(-r \frac{\partial}{\partial r} \biggr) + \frac{1}{2} \Delta_{S^2} +rZ + r^2 E \biggr] u =0 .
\label{Aeq}
\end{equation}
It can be seen that the electron-nuclear Coulomb potential can be incorporated into the Fuchs type differential
operator acting on $(S^2)^{\wedge}$ with embedded conical singularity at the origin.

Let $\omega,\tilde{\omega}, \hat{\omega}$ be cut-off functions such
that $\omega\tilde{\omega}=\omega,\quad \omega \hat{\omega}=\hat{\omega}$.
Throughout this paper a cut-off function is any real-valued $\omega
\in C^\infty_0(\overline{\mathbb{R}}_+)$ such that $\omega(r) \equiv
1$ close to $r=0$.
It is easy to show that $A$ can be decomposed into two parts in the
following way
\[
A=\omega A\tilde{\omega}+(1-\omega)A(1-\hat{\omega}).
\]
Alternatively, the first term can be expressed as Mellin
pseudo-differential operator with holomorphic operator valued symbol 
\[
 h(r,w) = \tfrac{1}{2} \bigl( w^2 -w + \Delta_{S^2} \bigr) +rZ +r^2 E ,
\]
and $A$ in the second term can be
pushed forward to the differential operator $E-H$, i.e.,
\[
 A  = \omega r^{-2} \opm_M^{\gamma-1}(h) \tilde{\omega}+(1-\omega)(E-H)(1-\hat{\omega}).
\]
In the sequel we investigate $A$ as an operator in the cone algebra,
corresponding to the stretched cone $(S^2)^\wedge$. (We refer, for
example, to \cite[ Chapter 8]{ES97} for details  on such operators and
corresponding symbols; a brief outline is given in
Appendix \ref{Mellin}.)

As a crucial point first we consider the  bijectivity of the
principal conormal symbol of the Mellin operator
\[
 \sigma_M(A)(w) = \tfrac{1}{2} \left( w^2 -w + \Delta_{S^2} \right),
\]
as a parameter dependent second order differential operator between
standard Sobolev spaces on $S^2$.

\begin{lemma}
The principal conormal symbol defines isomorphisms
\begin{equation}\label{fr}
\sigma_M(A)(w): H^s(S^2) \rightarrow H^{s-2}(S^2)
\end{equation}
for any $s\in\mathbb{R}$ provided that $w\notin \mathbb{Z}$.
\label{lemma1}
\end{lemma}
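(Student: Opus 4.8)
The plan is to diagonalise $\sigma_M(A)(w)$ by means of the spectral decomposition of the Laplace--Beltrami operator $\Delta_{S^2}$. Recall that $-\Delta_{S^2}$ has pure point spectrum $\{\ell(\ell+1): \ell\in\mathbb{N}_0\}$, the eigenvalue $\ell(\ell+1)$ having finite multiplicity $2\ell+1$ (spherical harmonics of degree $\ell$), and that for every $s\in\mathbb{R}$ the Sobolev space $H^s(S^2)$ is characterised by
\[
 u=\sum_{\ell=0}^\infty u_\ell,\qquad \|u\|_{H^s(S^2)}^2\ \simeq\ \sum_{\ell=0}^\infty\bigl(1+\ell(\ell+1)\bigr)^s\,\|u_\ell\|_{L^2(S^2)}^2 ,
\]
where $u_\ell$ is the projection of $u$ onto the $\ell$-th eigenspace. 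On that eigenspace $\sigma_M(A)(w)$ acts as multiplication by the scalar $\tfrac12\bigl(w^2-w-\ell(\ell+1)\bigr)$, so the whole analysis reduces to the behaviour of these multipliers.

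Next I would record the elementary factorisation
\[
 w^2-w-\ell(\ell+1)\ =\ \bigl(w-(\ell+1)\bigr)\bigl(w+\ell\bigr),
\]
valid because $1+4\ell(\ell+1)=(2\ell+1)^2$, so that the $\ell$-th multiplier vanishes precisely when $w=\ell+1$ or $w=-\ell$. As $\ell$ runs through $\mathbb{N}_0$, the values $\ell+1$ fill out $\{1,2,3,\dots\}$ and the values $-\ell$ fill out $\{0,-1,-2,\dots\}$; their union is exactly $\mathbb{Z}$. Hence, under the hypothesis $w\notin\mathbb{Z}$, no multiplier vanishes, which immediately gives injectivity: $\sigma_M(A)(w)u=0$ forces $u_\ell=0$ for all $\ell$, hence $u=0$.

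For boundedness and surjectivity I would invoke the two-sided estimate that, for fixed $w\notin\mathbb{Z}$, there are constants $0<c_w\le C_w<\infty$ with
\[
 c_w\bigl(1+\ell(\ell+1)\bigr)\ \le\ \bigl|w^2-w-\ell(\ell+1)\bigr|\ \le\ C_w\bigl(1+\ell(\ell+1)\bigr)\qquad\text{for all }\ell\in\mathbb{N}_0 .
\]
The upper bound is trivial; the lower bound holds since the left side grows quadratically in $\ell$ while being bounded below by a positive constant on the finite set of small $\ell$ (here $w\notin\mathbb{Z}$ enters). The upper estimate together with the norm characterisation shows that $\sigma_M(A)(w)\colon H^s(S^2)\to H^{s-2}(S^2)$ is continuous. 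Conversely, given $f=\sum_\ell f_\ell\in H^{s-2}(S^2)$, set $u:=\sum_\ell\tfrac{2}{\,w^2-w-\ell(\ell+1)\,}f_\ell$; the lower estimate yields
\[
 \|u\|_{H^s(S^2)}^2\ \simeq\ \sum_{\ell}\bigl(1+\ell(\ell+1)\bigr)^{s}\,\frac{4\,\|f_\ell\|_{L^2}^2}{\,|w^2-w-\ell(\ell+1)|^2\,}\ \le\ \frac{4}{c_w^2}\sum_\ell\bigl(1+\ell(\ell+1)\bigr)^{s-2}\|f_\ell\|_{L^2}^2\ \simeq\ \|f\|_{H^{s-2}(S^2)}^2 ,
\]
so $u\in H^s(S^2)$ and $\sigma_M(A)(w)u=f$. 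This establishes that (\ref{fr}) is an isomorphism for every $s\in\mathbb{R}$ whenever $w\notin\mathbb{Z}$.

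The argument is essentially soft; the only point deserving care is the \emph{uniform} lower bound on $|w^2-w-\ell(\ell+1)|$, which is what lets one pass from pointwise invertibility on each eigenspace to invertibility between the Sobolev spaces with a norm-controlled inverse. It is also worth noting that, since $1+4\ell(\ell+1)>0$, the quadratic $w^2-w-\ell(\ell+1)$ has only the two real roots $\ell+1$ and $-\ell$, so no non-real $w$ is ever excluded — consistent with the clean statement $w\notin\mathbb{Z}$, and exactly the set whose elements later appear as poles of the Mellin-inverted symbol governing the asymptotics near $r=0$.
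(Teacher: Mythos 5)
Your proof is correct, but it proceeds differently from the paper. The paper argues softly: ellipticity of $\sigma_M(A)(w)$ gives the Fredholm property of $\sigma_M(A)(w):H^s(S^2)\to H^{s-2}(S^2)$ for every $s$, the kernel and the kernel of the adjoint are independent of $s$, and both are shown to be trivial for $w\notin\mathbb{Z}$ by testing against the spherical harmonics (the cokernel argument being the pairing $\langle\sigma_M(A)Y_{lm},v\rangle=\tfrac12\bigl(w^2-w-l(l+1)\bigr)\langle Y_{lm},v\rangle$ together with completeness); bijectivity of a bounded Fredholm map then yields the isomorphism. You instead diagonalise the symbol in the spherical-harmonic decomposition, observe the factorisation $w^2-w-\ell(\ell+1)=(w-\ell-1)(w+\ell)$ whose zero set over all $\ell$ is exactly $\mathbb{Z}$, and prove the two-sided multiplier estimate $c_w\bigl(1+\ell(\ell+1)\bigr)\le\bigl|w^2-w-\ell(\ell+1)\bigr|\le C_w\bigl(1+\ell(\ell+1)\bigr)$, which gives continuity, injectivity, and an explicitly norm-controlled inverse in one stroke; in effect you construct the resolvent that the paper only writes down later in (\ref{sMAm1}). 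What each approach buys: yours is elementary, self-contained and quantitative, and does not need Fredholm theory or the $s$-independence of kernels, but it relies on the special circumstance that the conormal symbol is a polynomial in $\Delta_{S^2}$ and hence simultaneously diagonalised by the $Y_{lm}$; the paper's argument is the one that survives in the general cone calculus, where the principal conormal symbol is merely an elliptic parameter-dependent operator with no common eigenbasis, and where only Fredholmness plus identification of kernel and cokernel is available. The one point in your write-up that genuinely needs the hypothesis $w\notin\mathbb{Z}$ is the uniform lower bound for the finitely many small $\ell$, and you flag this correctly; note also that your triangle-inequality argument for large $\ell$ works verbatim for complex $w$, so the estimate is valid on all of $\mathbb{C}\setminus\mathbb{Z}$ as required.
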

\begin{proof}
From the ellipticity of $\sigma_M(A)$ follows the Fredholm property
of (\ref{fr}) for any $s\in\mathbb{R}$.  Therefore it is sufficient
to consider the (of $s$ independent)  kernels of (\ref{fr}) and its
adjoint. Let us take $s=2.$ The eigenfunctions of the operator
$\Delta_{S^2}: H^2(S^2) \rightarrow L^2(S^2)$ are the spherical
harmonics $\{Y_{lm} : l\in \mathbb{N}_0, -l \leq m \leq l \}$
corresponding to the $2l+1$ times degenerate eigenvalues $-l(l+1)$.
 These functions form a complete orthogonal basis in $L^2(S^2)$ and belong to $H^\infty(S^2)$.
A simple calculation shows that the kernel of
$\sigma_M(A)$ has no nontrivial element for $w \notin \mathbb{Z}$. The adjoint
operator $\sigma_M(A)^\ast: L^2(S^2) \rightarrow H^{-2}(S^2)$ satisfies
$\langle u , \sigma_M(A)^\ast v \rangle = \langle \sigma_M(A) u , v \rangle$
for all $u \in H^2(S^2)$.
If $v$ belongs to the kernel of $\sigma_M(A)^\ast$, we get $\langle \sigma_M(A)u , v \rangle =0$ for any $u \in H^2(S^2)$; in particular,
\[
 \langle \sigma_M(A) Y_{lm} , v \rangle =
 \tfrac{1}{2} \left( w^2 -w -l(l+1) \right) \langle Y_{lm} , v \rangle=0 .
\]
It follows that if $w\notin \mathbb{Z}$ we have $\langle Y_{lm} , v \rangle=0$ for all $l,m$ and therefore $v=0$.
\end{proof}

Let us now check the ellipticity conditions for $A$ (as  an element
in $C^2((S^2)^\wedge,\boldsymbol{g}),
\boldsymbol{g}=(\gamma,\gamma-2,\Theta)$ with $\Theta=(-\infty,0]$)
in the sense of \cite[Definition 6, Section 8.2.5]{ES97}. The first
condition is obviously satisfied. The second condition follows for
any $\gamma\notin \mathbb{Z}+\frac{1}{2}$ by Lemma 1.
It only remains to check the exit behaviour of $A$.  The
corresponding exit symbols
\[
 \sigma_e({A}) = -\tfrac{1}{2} \left( \xi_1^2 + \xi_2^2 + \xi_3^2 \right) +E \ \ \mbox{and} \
 \sigma_{\psi,e}({A}) = -\tfrac{1}{2} \left( \xi_1^2 + \xi_2^2 + \xi_3^2 \right) ,
\]
have constant coefficients and satisfy for $E< 0$ (which is the case in our application) the required exit conditions, i.e.,
\[
 \sigma_e({A}) \neq 0 \ \ \mbox{for all} \ \xi \in \mathbb{R}^3 \ \ \mbox{and} \
 \sigma_{\psi,e}({A}) \neq 0 \ \ \mbox{for all} \ \xi \in \mathbb{R}^3 \setminus \{0\} .
\]
From the ellipticity of $A$ it follows the existence of a parametrix
and corresponding Green operator. The explicit construction of the
parametrix requires the inverse of the principal conormal symbol
$\sigma_M(A)$ as a meromorphic operator function with poles at the
non-bijectivity points $w_0 \in \mathbb{Z}$.

\begin{lemma}
\label{lemma2} The inverse of the principal conormal symbol
$\sigma_M(A)^{-1}(w)$ is a meromorphic operator function with simple
poles at $w_0 \in \mathbb{Z}$; more precisely,
\[
 \Xi \sigma_M(A)^{-1}(w) = -2(w+l)^{-1} \sum_{m=-l}^l \langle \cdot, \tilde{Y}_{lm} \rangle
 Y_{lm} \ \ \mbox{for} \ w_0 \leq 0 \ \mbox{and} \ l=-w_0
\]
\[
 \Xi \sigma_M(A)^{-1}(w) = 2(w-l-1)^{-1} \sum_{m=-l}^l \langle \cdot, \tilde{Y}_{lm} \rangle
 Y_{lm} \ \ \mbox{for} \ w_0 > 0 \ \mbox{and} \ l=w_0-1
\]
where $\tilde{Y}_{lm} := \tfrac{1}{2l+1} Y_{lm}$ and $\Xi$ denotes the principal part of the Laurent expansion
of $\sigma_M(A)^{-1}(w)$ in a neighbourhood of the pole $w_0$.
\end{lemma}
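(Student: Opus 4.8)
The plan is to exploit the explicit spectral decomposition of $\Delta_{S^2}$ already used in the proof of Lemma \ref{lemma1}. Since the spherical harmonics $Y_{lm}$ form a complete orthogonal basis of $L^2(S^2)$ with $\Delta_{S^2}Y_{lm}=-l(l+1)Y_{lm}$, the operator $\sigma_M(A)(w)$ acts on the $(2l+1)$-dimensional eigenspace $V_l:=\spn\{Y_{lm}:-l\le m\le l\}$ as the scalar $\tfrac12\bigl(w^2-w-l(l+1)\bigr)$. The one algebraic fact driving everything is the factorization $w^2-w-l(l+1)=(w+l)(w-l-1)$, so that $\sigma_M(A)(w)|_{V_l}=\tfrac12(w+l)(w-l-1)\,\mathrm{id}_{V_l}$ is invertible exactly when $w\notin\{-l,\,l+1\}$, with inverse $2(w+l)^{-1}(w-l-1)^{-1}\,\mathrm{id}_{V_l}$.

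Assembling these blockwise inverses, and writing the $L^2$-orthogonal projection onto $V_l$ as $P_l=(2l+1)\sum_{m=-l}^{l}\langle\cdot,\tilde Y_{lm}\rangle Y_{lm}$ (with $Y_{lm}$ normalised in $L^2(S^2)$ and $\tilde Y_{lm}=\tfrac1{2l+1}Y_{lm}$), one obtains the candidate
\[
\sigma_M(A)^{-1}(w)=\sum_{l=0}^{\infty}\frac{2}{(w+l)(w-l-1)}\,P_l .
\]
I would first check that this series converges in $\mathcal L\bigl(H^s(S^2),H^{s-2}(S^2)\bigr)$ locally uniformly in $w$ on $\mathbb C\setminus\mathbb Z$: $P_l$ maps into the $\Delta_{S^2}$-eigenspace for the eigenvalue $-l(l+1)$, whence $\|P_l\|_{H^s\to H^{s-2}}=O(l^{-2})$, while on compact subsets of $\mathbb C\setminus\mathbb Z$ the scalar prefactor is $O(l^{-2})$ as well. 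Thus the right-hand side is a holomorphic $\mathcal L(H^s,H^{s-2})$-valued function off $\mathbb Z$, and since it composes with $\sigma_M(A)(w)$ to the identity on each $V_l$ (hence, by density of $\bigoplus_l V_l$ and boundedness, on all of $H^s$), it coincides there with the inverse, which is unique by Lemma \ref{lemma1}.

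It then remains to read off the poles. For a fixed $w_0\in\mathbb Z$ the $l$-th summand is singular at $w_0$ iff $w_0=-l$ — possible only for $w_0\le0$, forcing $l=-w_0$ — or $w_0=l+1$ — possible only for $w_0\ge1$, forcing $l=w_0-1$; the two alternatives are mutually exclusive, and in each case precisely one index $l$ contributes, with a simple zero of the denominator. Evaluating the remaining linear factor at the pole, namely $(w-l-1)\big|_{w=-l}=-(2l+1)$ in the first case and $(w+l)\big|_{w=l+1}=2l+1$ in the second, and using $P_l=(2l+1)\sum_m\langle\cdot,\tilde Y_{lm}\rangle Y_{lm}$, the two factors of $2l+1$ cancel and the principal part $\Xi$ of the Laurent expansion at $w_0$ comes out exactly as $-2(w+l)^{-1}\sum_m\langle\cdot,\tilde Y_{lm}\rangle Y_{lm}$ for $w_0\le0$, $l=-w_0$, respectively $2(w-l-1)^{-1}\sum_m\langle\cdot,\tilde Y_{lm}\rangle Y_{lm}$ for $w_0>0$, $l=w_0-1$.

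The computation is otherwise routine; the only points needing a little care are the operator-norm convergence of the series together with the appeal to Lemma \ref{lemma1} to identify its sum with $\sigma_M(A)^{-1}(w)$ as a meromorphic $\mathcal L(H^s,H^{s-2})$-valued function, and the small amount of bookkeeping showing that \emph{every} integer is a pole, that each is simple, and that the families $\{w_0\le0\}$ and $\{w_0>0\}$ never overlap.
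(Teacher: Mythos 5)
Your proof is correct, but it is not the argument printed in the paper: there, Lemma \ref{lemma2} is deduced from the operator-valued residue theorem of Gohberg and Sigal (Theorem 7.1 in \cite{GS71}), by showing that no integer $w_0$ admits an associated vector --- the equation $\left(w_0^2-w_0+\Delta_{S^2}\right)f'_{lm}(w_0)=-(2w_0-1)Y_{lm}$ is unsolvable because its right-hand side lies in the kernel of the self-adjoint operator on the left --- so that all null multiplicities equal one and the poles of $\sigma_M(A)^{-1}(w)$ are simple. What you do instead is precisely the ``alternative simple proof'' the authors mention right after the lemma: diagonalize $\sigma_M(A)(w)$ over the spherical-harmonic eigenspaces, arrive at the representation (\ref{sMAm1}) with the factorization $w^2-w-l(l+1)=(w+l)(w-l-1)$, and read off location, simplicity and principal part of each pole by elementary residue calculus; your bookkeeping (exactly one $l$ is singular for each $w_0$, the evaluations $(w-l-1)|_{w=-l}=-(2l+1)$ and $(w+l)|_{w=l+1}=2l+1$, and the cancellation against $P_l=(2l+1)\sum_m\langle\cdot,\tilde Y_{lm}\rangle Y_{lm}$) reproduces the stated formulas, including the normalization $\tilde Y_{lm}=Y_{lm}/(2l+1)$. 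The trade-off is that the Gohberg--Sigal route needs no explicit spectral resolution and generalizes to conormal symbols that are not functions of an operator with known eigenfunctions, whereas your computation is more elementary and actually delivers the explicit coefficients, which the paper's printed proof leaves implicit. One small point you handled correctly, perhaps without noticing the pitfall: in $\mathcal{L}\bigl(H^{s-2}(S^2),H^s(S^2)\bigr)$, where the inverse naturally acts, the series is not absolutely convergent in operator norm (its terms are $O(1)$ there), so summing in $\mathcal{L}\bigl(H^s,H^{s-2}\bigr)$ for every $s$, as you do, and then identifying the sum with the inverse elementwise via the bijectivity of Lemma \ref{lemma1}, is a legitimate way to phrase it; alternatively one can invoke the mutual orthogonality of the ranges of the $P_l$ to get boundedness from $H^{s-2}$ to $H^s$ directly.
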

\begin{proof}
The lemma is an immediate consequence of Theorem 7.1 in \cite{GS71}. In order to apply the theorem it
is only necessary to calculate at $w=w_0$ the maximum of the null multiplicity of
\[
 \left( w^2 -w + \Delta_{S^2} \right) f_{lm}(w,\theta,\phi) ,
\]
for arbitrary $w$ holomorphic functions with $f_{lm}(w_0,\theta,\phi) = Y_{lm}(\theta,\phi)$.
Suppose the first derivative with respect to $w$ vanishes at $w=w_0$, i.e.,
\[
 \left( 2w_0 -1 \right) f_{lm}(w_0,\theta,\phi) + \left( w_0^2 -w_0 + \Delta_{S^2} \right) f'_{lm}(w_0,\theta,\phi) =0 ,
\]
which can be written as
\[
 \left( w_0^2 -w_0 + \Delta_{S^2} \right) f'_{lm}(w_0,\theta,\phi) = - \left( 2w_0 -1 \right) Y_{lm}(\theta,\phi) .
\]
This equation has no solution for $f'_{lm}(w_0,\theta,\phi)$ since the right side belongs to the kernel of the
self-adjoint operator on the left side. Therefore all null multiplicities are of first order.
\end{proof}

According to the spectral theorem for operators with purely
pointwise spectrum (cf. \cite{Triebel}), we can represent the
inverse of the principal conormal symbol by the absolutely
convergent sum
\begin{equation}
 \sigma_M(A)^{-1}(w) = 2 \sum_{l=0}^\infty \frac{P_l}{w^2-w -l(l+1)} \ \ \mbox{with} \
 P_l := \sum_{m=-l}^l \langle \cdot , Y_{lm} \rangle Y_{lm}.
\label{sMAm1}
\end{equation}
An alternative simple proof of Lemma \ref{lemma2} can be given using the explicit representation (\ref{sMAm1}).

\subsection{Construction of the parametrix and Green operator}
\label{parametrix} The operator $A$ belongs to the cone algebra on
$(S^2)^\wedge$ and defines bounded operators between weighted
Sobolev spaces
\[
 A: {\cal K}^{s,\gamma}((S^2)^\wedge) \longrightarrow {\cal K}^{s-2,\gamma-2}((S^2)^\wedge) .
\]
In the following, we require the corresponding weighted Sobolev
spaces with asymptotics ${\cal K}^{s,\gamma}_Q ((S^2)^\wedge)$.
These spaces can be considered as  direct sums
\begin{equation}
 {\cal K}^{s,\gamma}_Q ((S^2)^\wedge) = {\cal E}^\gamma_Q + {\cal K}^{s,\gamma}_\Theta ((S^2)^\wedge) ,
\label{E+K}
\end{equation}
of the flattened weighted Sobolev spaces
\[
 {\cal K}^{s,\gamma}_\Theta ((S^2)^\wedge) := \bigcap_{\epsilon > 0} {\cal K}^{s,\gamma - \vartheta - \epsilon}
 ((S^2)^\wedge) ,
\]
with $\Theta =(\vartheta,0]$, $-\infty \leq \vartheta < 0$, and
asymptotic spaces
\[
 {\cal E}^\gamma_Q := \biggl\{ \omega \sum_j \sum_{k=0}^{m_j} c_{jk}(x) r^{-q_j} \ln^k r \biggr\} .
\]
The asymptotic space ${\cal E}^\gamma_Q$ is characterized by a sequence $q_j \in \mathbb{C}$
which is taken from a strip of the complex plane, i.e.,
\[
 q_j \in \left\{ z: \frac{3}{2}-\gamma + \vartheta < \Re z < \frac{3}{2}-\gamma \right\} ,
\]
where the width and location of this strip are determined by its {\em weight data} $(\gamma,\Theta)$
with $\Theta =(\vartheta,0]$ and $-\infty \leq \vartheta < 0$. Each substrip of finite width
contains only a finite number of $q_j$. Furthermore, the coefficients
$c_{jk}$ belong to finite dimensional subspaces $L_j \subset C^\infty(S^2)$.
The asymptotics of ${\cal E}^\gamma_Q$ is therefore completely
characterized by the {\em asymptotic type} $Q := \{(p_j,m_j,L_j)\}_{j \in \mathbb{Z}_+}$.
In the following, we employ the asymptotic subspaces
\begin{equation}
 {\cal S}^\gamma_Q ((S^2)^\wedge) := \left\{ u \in {\cal K}^{\infty,\gamma}_Q ((S^2)^\wedge) :
 (1- \omega) u \in {\cal S}(\mathbb{R},C^\infty(S^2))|_{\mathbb{R}_+ \times S^2} \right\} ,
\label{Sg}
\end{equation}
with Schwartz class type of behaviour for exit $r \rightarrow
\infty$. The spaces ${\cal K}^{s,\gamma}_Q((S^2)^\wedge)$ and ${\cal
S}^\gamma_Q ((S^2)^\wedge)$ are Fr\'echet spaces equipped with
natural semi-norms according to the decomposition (\ref{E+K}); we
refer to \cite{ES97, Schulze98} for further datails.

As an elliptic element of the operator class
$C^2((S^2)^\wedge,\boldsymbol{g})$, $\boldsymbol{g} =(\gamma,
\gamma-2, \Theta)$ with $\gamma\notin \mathbb{Z}+\frac{1}{2}$ and
$\Theta=(-\infty,0]$, $A$ has a parametrix $P$ in the cone algebra
which belongs to $C^{-2}((S^2)^\wedge,\boldsymbol{g})$,
$\boldsymbol{g} =(\gamma-2, \gamma, \Theta)$. It can be written in
the general form
\[
 P = \omega' r^2 \opm_M^{\gamma-3} \bigl( h^{(-1)}(r,w) \bigr) \tilde{\omega}' + \bigl( 1- \omega' \bigr)
 \tilde{P} \bigl( 1- \hat{\omega}' \bigr) ,
\]
where $\omega'$, $\tilde{\omega}'$, $\hat{\omega}'$ are cut-off
functions satisfying $\omega' \tilde{\omega}' = \omega'$, $\omega'
\hat{\omega}' = \hat{\omega}'$ and $\tilde{P}$ represents a standard
pseudo-differential operator of order $-2$ on $\mathbb{R}^3$. By
definition, the parametrix satisfies the equation
\[
 P A = 1 +G \ \ \mbox{with} \ G \in C_G((S^2)^\wedge, \boldsymbol{g}_l), \ \boldsymbol{g}_l=(\gamma,\gamma,\Theta) ,
\]
where the Green operator $G$ maps ${\cal
K}^{s,\gamma}((S^2)^\wedge)$ into ${\cal S}^\gamma_Q$ for some
discrete asymptotics $Q$. Application of the parametrix to
(\ref{Aeq}) yields $u =-G u$, i.e., the asymptotic behaviour of the
eigenfunctions belonging to the eigenvalue $E$ are completely
characterized by the corresponding Green operator.

In order to proceed with our discussion, we need an educated guess
for the parameter $\gamma$. Taking $\gamma$ to large, it might
happen that a solution of $A u =0$ does not exist in ${\cal
K}^{s,\gamma}$ and a too small value for $\gamma$ may introduce
additional solutions which are too singular to be meaningful from a
physical viewpoint. In general it should be not too difficult to
identify a priori an appropriate interval for $\gamma$ using
standard regularity results. In particular, for the hydrogen atom explicitly known  eigenfunctions belong to ${\cal K}^{s,\gamma}$ with $\gamma
< \frac{3}{2}$. Therefore we assume $\frac{1}{2} < \gamma <
\frac{3}{2}$ in the following.

After we have established the notion of a parametrix and Green operator in the pseudo-differential algebra,
it is possible to state our main theorem.

\begin{theorem}
\label{theorem1} The Hamiltonian of the hydrogen atom
{\em(}$Z=1${\em )} and for isoelectronic ions {\em
(\ref{Hamiltonian})} belongs to the cone algebra, i.e., it belongs
to the operator class $C^2((S^2)^\wedge,\boldsymbol{g})$,
$\boldsymbol{g} =(\gamma, \gamma-2, \Theta),$ and is elliptic in the
sense of {\em \cite[Definition 6, Section 8.2.5]{ES97}} for $\gamma
\not\in \mathbb{Z} +\frac{1}{2}$. There exist a corresponding
parametrix $P \in C^{-2}((S^2)^\wedge,\boldsymbol{g}^{-1})$,
$\boldsymbol{g}^{-1} =(\gamma-2, \gamma, \Theta)$ and Green operator
$G \in C_G((S^2)^\wedge, \boldsymbol{g}_l)$,
$\boldsymbol{g}_l=(\gamma,\gamma,\Theta)$. The Green operator for
$\frac{1}{2} < \gamma < \frac{3}{2}$ has a leading order asymptotic
expansion of the form \setlength{\jot}{5mm}
\begin{eqnarray}
\nonumber
 G(u) & = & \omega' \left(1-Zr+(Z^2-E) \tfrac{1}{3}r^2 + \cdots \right) P_0 {\cal Q}_0(u) \\ \label{Ghydrogen}
 &  & +\omega' \left( Zr -\tfrac{1}{2} (Zr)^2 +\tfrac{1}{10}(Z^2-2E) Zr^3 + \cdots \right)
 P_1 {\cal Q}_1(u) + \ \cdots
\end{eqnarray}
\setlength{\jot}{3mm}
with coefficients
\begin{eqnarray*}
 {\cal Q}_0(u) & = & -2 \left[ \bigl( M ( \omega'' -1) \opm_M^{\gamma-1} \bigl( h_0 \bigr) (\tilde{\omega}u) \bigr)(0)
 +\bigl( M \omega'' (rZ+r^2E) u \bigr)(0) \right] \\
 {\cal Q}_1(u) & = & -\tfrac{2}{3} Z^{-1} \left[ \bigl( M ( \omega'' -1) \opm_M^{\gamma-1}
 \bigl( h_0 +Zr \bigr) (\tilde{\omega}u) \bigr)(-1) +
 \bigl( M \omega'' r^2E u \bigr)(-1) \right] \\
 & & - \tfrac{2}{3} \left[ \bigl( M ( \omega'' -1) \opm_M^{\gamma-1} \bigl( h_0 \bigr) (\tilde{\omega}u) \bigr)(0) +
 \bigl( M \omega'' (rZ+r^2E) u \bigr)(0) \right]
\end{eqnarray*}
given by values of Mellin transformations $M$ {\em(}cf.~Appendix \ref{Mellin}{\em)} at non-bijectivity points of the principal
conormal symbol $\sigma_M(A)$,
where $u \in {\cal K}^{s,\gamma}((S^2)^\wedge)$, $\omega'' := \tilde{\omega}' \omega$, and
$P_l$, $l=0,1,2,\ldots$,
denotes projection operators on subspaces which belong to eigenvalues
$-l(l+1)$ of the Laplace-Beltrami operator on $S^2$.
\end{theorem}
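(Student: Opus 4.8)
The plan is to settle the structural assertions with what has already been assembled, then to invoke the general parametrix construction of the cone calculus, and finally to carry out an explicit order-by-order computation of the Mellin symbol of the parametrix, from which the expansion \eqref{Ghydrogen} is read off as a sum of residues. For the first two assertions I would simply record that the decomposition $A=\omega r^{-2}\opm_M^{\gamma-1}(h)\tilde\omega+(1-\omega)(E-H)(1-\hat\omega)$, with the holomorphic operator-valued Mellin symbol $h(r,w)=\tfrac{1}{2}(w^2-w+\Delta_{S^2})+rZ+r^2E$ and with $E-H$ a second-order differential operator on $\mathbb{R}^3$, is exactly the normal form of an element of $C^2((S^2)^\wedge,\boldsymbol{g})$, $\boldsymbol{g}=(\gamma,\gamma-2,\Theta)$, $\Theta=(-\infty,0]$; the three ellipticity conditions of \cite[Definition 6, Section 8.2.5]{ES97} are then in place, namely the homogeneous principal symbol is that of $-\tfrac{1}{2}\Delta$ and is nonvanishing, the principal conormal symbol $\sigma_M(A)(w)$ is an isomorphism on the weight line $\Gamma_{3/2-\gamma}$ for every $\gamma\notin\mathbb{Z}+\tfrac{1}{2}$ by Lemma~\ref{lemma1}, and the exit symbols $\sigma_e(A)$, $\sigma_{\psi,e}(A)$ are nonvanishing for $E<0$.

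Next, applying the parametrix theorem for elliptic cone operators \cite[Chapter 8]{ES97} to $A$ yields $P\in C^{-2}((S^2)^\wedge,\boldsymbol{g}^{-1})$, $\boldsymbol{g}^{-1}=(\gamma-2,\gamma,\Theta)$, of the stated form, whose interior Mellin symbol $h^{(-1)}(r,w)$ is the formal Mellin-Leibniz inverse of $h(r,w)$, a meromorphic operator function with poles in $\mathbb{Z}$ whose principal Laurent parts near these poles are those described by Lemma~\ref{lemma2} and the series \eqref{sMAm1}. Then $G:=PA-1\in C_G((S^2)^\wedge,\boldsymbol{g}_l)$, $\boldsymbol{g}_l=(\gamma,\gamma,\Theta)$, is the associated Green operator; by the mapping properties of the Green ideal it carries ${\cal K}^{s,\gamma}((S^2)^\wedge)$ into ${\cal S}^\gamma_Q$ for a discrete asymptotic type $Q$ whose exponents are exactly the poles of $h^{(-1)}$ lying in the strip $\{\,\tfrac{3}{2}-\gamma+\vartheta<\Re w<\tfrac{3}{2}-\gamma\,\}$. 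Applying $P$ to \eqref{Aeq} gives $u=-Gu$, which is the assertion that the eigenfunction asymptotics are encoded in $G$.

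The substantive part is the computation of the leading terms of $G$. I would split $h=h_0+h_1$ with $h_0(w)=\tfrac{1}{2}(w^2-w+\Delta_{S^2})$ and $h_1(r)=rZ+r^2E$ (a $w$-independent multiplication symbol), and build $h^{(-1)}=\sum_{k\ge0}h^{(-1)}_k$ recursively from $h^{(-1)}\#h\sim1$: the zeroth term is $h^{(-1)}_0=\sigma_M(A)^{-1}(w)=2\sum_l P_l(w^2-w-l(l+1))^{-1}$, and each further term costs one more factor $h_0^{-1}$ paired against $h_1$ and against the Taylor-remainder terms of the Mellin-Leibniz product, all of which shift the $w$-argument by integers and so translate the poles of $h_0^{-1}$ by integers. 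I would then form $PA$, move the powers of $r$ through the operators by the Mellin calculus (the relation $\tilde\omega'\omega=\omega''$ localising everything near $r=0$), and identify $G=PA-1$ near $r=0$ by shifting the Mellin contour across the poles of the resulting symbol at $w=0,-1,-2,\dots$. Each residue produces a term $\omega'r^{-q}(\cdots)P_l$ whose coefficient is the principal Laurent part from Lemma~\ref{lemma2}; carrying enough orders of the recursion reproduces the numerical factors $1,-Z,\tfrac{1}{3}(Z^2-E),\dots$ against $P_0$ and $Z,-\tfrac{1}{2}Z^2,\tfrac{1}{10}(Z^2-2E)Z,\dots$ against $P_1$. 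Because of the integer $w$-shifts a given asymptotic order is fed by several residues, and each residue is paired against the value $M(\,\cdot\,)(w_0)$ of the Mellin transform, at the relevant non-bijectivity point $w_0$, of the cut-off discrepancy $(\omega''-1)\opm_M^{\gamma-1}(h_0)(\tilde\omega u)+\omega''(rZ+r^2E)u$ applied to $u$: evaluation at $w_0=0$ gives ${\cal Q}_0(u)$, and evaluation at $w_0=-1$ together with the $w_0=0$ contribution already present in ${\cal Q}_0$ gives ${\cal Q}_1(u)$. The genuinely exterior term $(1-\omega')\tilde P(1-\hat\omega')$ and the remaining cross terms of $PA$ are flat at $r=0$ and do not affect these leading asymptotics.

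The main obstacle is precisely this last step: running the recursion for $h^{(-1)}$ far enough, keeping track through the repeated integer $w$-shifts of which poles land in the admissible strip and what their exact Laurent coefficients are (coincidences of shifted poles are what would in general raise the pole orders and bring in the logarithmic terms allowed in ${\cal E}^\gamma_Q$, although they do not occur for the two leading families in \eqref{Ghydrogen}), and organising the cut-off functions $\omega,\tilde\omega,\omega',\tilde\omega'$ so that the abstract residue sum collapses to the closed forms ${\cal Q}_0(u)$, ${\cal Q}_1(u)$. Everything else in the theorem is a direct application of the results established above.
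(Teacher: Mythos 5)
Your proposal follows essentially the same route as the paper: ellipticity via Lemma~\ref{lemma1} and the exit symbols, existence of $P$ and $G$ from the cone calculus, a recursive Taylor-in-$r$ computation of the meromorphic Mellin symbol of the parametrix (the paper's $h^{(-1)}_0,h^{(-1)}_1,h^{(-1)}_2$, with the integer shifts you describe), and identification of $G=PA-1$ near $r=0$ by contour shifts picking up residues at the non-bijectivity points, split into weight-mismatch and cut-off-discrepancy contributions exactly as in the paper's $G_I$ and $G_{II}$ terms, evaluated at $w_0=0,-1$ to give ${\cal Q}_0$ and ${\cal Q}_1$. The one step you leave as an acknowledged obstacle --- running the residue calculus far enough to produce the explicit numerical coefficients in (\ref{Ghydrogen}) --- is precisely the content of the paper's Appendix~\ref{AppendixA}, so your plan coincides with the paper's proof.
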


\begin{remark}
\label{remark1}
 Using the explicit expression for eigenvalues of the
hydrogen atom and isoelectronic ions, i.e., $E_n= -\frac{1}{2}
\frac{Z^2}{n^2}$, $n \in \mathbb{N}$, we obtain
\setlength{\jot}{5mm}
\begin{eqnarray*}
 G_n(u) & = & \omega' \left(1-Zr+(1-\tfrac{1}{2n^2}) \tfrac{1}{3}(Zr)^2 + \cdots \right) P_0 {\cal Q}_0(u) \\
 &  & +\omega' \left( Zr -\tfrac{1}{2} (Zr)^2 +\tfrac{1}{10}(1-\tfrac{1}{n^2}) (Zr)^3 + \cdots \right)
 P_1 {\cal Q}_1(u) + \ \cdots
\end{eqnarray*}
\setlength{\jot}{3mm} which can be easily verified for asymptotic
expansions of the analytic expressions of the eigenfunctions for
ground and excited states with angular momentum quantum number $l=0$
and $1$, cf.~the monograph {\em \cite{BS57}}.
\end{remark}

In order to calculate the asymptotics of the Green operator it is sufficient to focus on a neighbourhood
of the embedded conical singularity at $r=0$. Using the general commutation relation
\begin{equation}\label{gohar}
 r^n \opm_M^{\beta} \bigl( g(r,w) \bigr) r^{-n} = \opm_M^{\beta+n} \bigl( T^n g(r,w) \bigr)
\end{equation}
with shift operator $T^n$ acting on the Mellin symbol by $T^n g(r,w) = g(r,w+n)$, we get
\begin{eqnarray}
\nonumber
 P A &=& \omega' r^2 \opm_M^{\gamma-3} \bigl( h^{(-1)}(r,w) \bigr) \tilde{\omega}'
 \omega r^{-2} \opm_M^{\gamma-1} \bigl( h(r,w) \bigr) \tilde{\omega} + \cdots \\ \label{PA}
 &=& \omega' \opm_M^{\gamma-1} \bigl( T^2 h^{(-1)}(r,w) \bigr) \tilde{\omega}'
 \omega \opm_M^{\gamma-1} \bigl( h(r,w) \bigr) \tilde{\omega} + \cdots
\end{eqnarray}
where the omitted terms do not contribute to the asymptotics of the Green operator.
For the following calculations it is convenient to change our notation, we define $h_0(w) := \sigma_M(A)(w)$,
$h_1(w) := Z$ and $h_2(w) := E$. Herewith the holomorphic symbol becomes
\[
 h(r,w) = h_0(w) +r h_1(w) +r^2 h_2(w) .
\]
Similarly, we perform a Taylor series expansion of the Mellin symbol of the parametrix
\[
 h^{(-1)}(r,w) = \sum_{i=0}^N r^i h^{(-1)}_i(w) + R_N(r,w) .
\]
Inserting both expansions into (\ref{PA}) yields
\begin{eqnarray*}
 P A &=& \omega' \opm_M^{\gamma-1} \biggl( \sum_{i=0}^N r^i T^2 h^{(-1)}_i(w) + T^2 R_N(r,w) \biggr) \tilde{\omega}'
 \omega \opm_M^{\gamma-1} \bigl( h_0(w) +r h_1(w) +r^2 h_2(w) \bigr) \tilde{\omega} + \cdots \\
 &=& \omega' \sum_{i=0}^N r^i \opm_M^{\gamma-1} \bigl( T^2 h^{(-1)}_i(w) \bigr) \tilde{\omega}'
 \omega \opm_M^{\gamma-1} \bigl( h_0(w) +r h_1(w) +r^2 h_2(w) \bigr) \tilde{\omega} + \cdots \\
 &=& \omega' \sum_{i=0}^N r^i \opm_M^{\gamma-1} \bigl( T^2 h^{(-1)}_i(w) \bigr) \tilde{\omega}'
 \omega \opm_M^{\gamma-1} \bigl( h_0(w) \bigr) \tilde{\omega} \\
 & & +\omega' \sum_{i=0}^N r^{i+1} \opm_M^{\gamma-2} \bigl( T^1 h^{(-1)}_i(w) \bigr) \tilde{\omega}'
 \omega \opm_M^{\gamma-1} \bigl( h_1(w) \bigr) \tilde{\omega} \\
 & & +\omega' \sum_{i=0}^N r^{i+2} \opm_M^{\gamma-3} \bigl( h^{(-1)}_i(w) \bigr) \tilde{\omega}'
 \omega \opm_M^{\gamma-1} \bigl( h_2(w) \bigr) \tilde{\omega} + \cdots
\end{eqnarray*}
The last two lines contain products of Mellin operators with different weights. Adding and
subtracting terms with Mellin operators of the same weight yields
\begin{eqnarray*}
 PA &=& \omega' \sum_{i=0}^N r^i \opm_M^{\gamma-1} \bigl( T^2 h^{(-1)}_i(w) \bigr) \tilde{\omega}'
 \omega \opm_M^{\gamma-1} \bigl( h_0(w) \bigr) \tilde{\omega} \\
 & & +\omega' \sum_{i=0}^N r^{i+1} \opm_M^{\gamma-1} \bigl( T^1 h^{(-1)}_i(w) \bigr) \tilde{\omega}'
 \omega \opm_M^{\gamma-1} \bigl( h_1(w) \bigr) \tilde{\omega} \\
 & & +\omega' \sum_{i=0}^N r^{i+2} \opm_M^{\gamma-1} \bigl( h^{(-1)}_i(w) \bigr) \tilde{\omega}'
 \omega \opm_M^{\gamma-1} \bigl( h_2(w) \bigr) \tilde{\omega} \\
 & & \left. \!\! \begin{split}
 +& \sum_{i=0}^N r^{i+1} \left[ \omega' \opm_M^{\gamma-2} \bigl( T^1 h^{(-1)}_i(w) \bigr) \tilde{\omega}'
 - \omega' \opm_M^{\gamma-1} \bigl( T^1 h^{(-1)}_i(w) \bigr) \tilde{\omega}' \right]
 \omega \opm_M^{\gamma-1} \bigl( h_1(w) \bigr) \tilde{\omega} \\
 +& \sum_{i=0}^N r^{i+2} \left[ \omega' \opm_M^{\gamma-3} \bigl( h^{(-1)}_i(w) \bigr) \tilde{\omega}'
 - \omega' \opm_M^{\gamma-1} \bigl( h^{(-1)}_i(w) \bigr) \tilde{\omega}' \right]
 \omega \opm_M^{\gamma-1} \bigl( h_2(w) \bigr) \tilde{\omega}
 \end{split} \right\} =: G_I \\
 & & + \cdots ,
\end{eqnarray*}
where the operators in the last two lines will be identified as a part $G_I$ of the Green operator $G$.
In the next step we identify the remaining part of the Green operator $G$
\begin{eqnarray*}
 P A &=& \omega' \sum_{i=0}^N r^i \opm_M^{\gamma-1} \bigl( T^2 h^{(-1)}_i(w) \bigr)
 \opm_M^{\gamma-1} \bigl( h_0(w) \bigr) \tilde{\omega} \\
 & & +\omega' \sum_{i=0}^N r^{i+1} \opm_M^{\gamma-1} \bigl( T^1 h^{(-1)}_i(w) \bigr)
 \opm_M^{\gamma-1} \bigl( h_1(w) \bigr) \tilde{\omega} \\
 & & +\omega' \sum_{i=0}^N r^{i+2} \opm_M^{\gamma-1} \bigl( h^{(-1)}_i(w) \bigr)
 \opm_M^{\gamma-1} \bigl( h_2(w) \bigr) \tilde{\omega}\\
 && +G_I \\
 & & \left. \!\! \begin{split}
 +& \omega' \sum_{i=0}^N r^i \opm_M^{\gamma-1} \bigl( T^2 h^{(-1)}_i(w) \bigr) \bigl( \tilde{\omega}'
 \omega-1 \bigr) \opm_M^{\gamma-1} \bigl( h_0(w) \bigr) \tilde{\omega} \\
 +& \omega' \sum_{i=0}^N r^{i+1} \opm_M^{\gamma-1} \bigl( T^1 h^{(-1)}_i(w) \bigr) \bigl( \tilde{\omega}'
 \omega-1 \bigr) \opm_M^{\gamma-1} \bigl( h_1(w) \bigr) \tilde{\omega} \\
 +& \omega' \sum_{i=0}^N r^{i+2} \opm_M^{\gamma-1} \bigl( h^{(-1)}_i(w) \bigr) \bigl( \tilde{\omega}'
 \omega-1 \bigr) \opm_M^{\gamma-1} \bigl( h_2(w) \bigr) \tilde{\omega}
 \end{split} \right\} =: G_{II} \\
 & & + \cdots .
\end{eqnarray*}
The individual operators in the first three lines and the omitted terms are classified according to their
asymptotic behaviour for $r \rightarrow 0$. Therefore, the term corresponding to $r^0$ is equal to the
identity operator and all other terms corresponding to $r^i$, $i=1,2,\ldots$, must be zero.
This provides a recursive scheme to calculate the meromorphic operator valued symbols $h^{(-1)}_i(w)$ of the parametrix
explicitly. For $r^0$ we get
\[
 1= \opm_M^{\gamma-1} \bigl( T^2 h^{(-1)}_0(w) \bigr) \opm_M^{\gamma-1} \bigl( h_0(w) \bigr)
 = \opm_M^{\gamma-1} \bigl( T^2 h^{(-1)}_0(w) h_0(w) \bigr)
\]
and therefore
\begin{equation}
 \Big(T^2 h^{(-1)}_0(w)\Big) h_0(w) =1 \ \longrightarrow \ h^{(-1)}_0(w) = T^{-2} h_0^{-1}(w)=h_0^{-1}(w-2) .
\label{h0m1}
\end{equation}
The condition for $j=1$ is
\[
 0= \opm_M^{\gamma-1} \bigl( T^2 h^{(-1)}_1(w) \bigr) \opm_M^{\gamma-1} \bigl( h_0(w) \bigr) +
 \opm_M^{\gamma-1} \bigl( T^1 h^{(-1)}_0(w) \bigr) \opm_M^{\gamma-1} \bigl( h_1(w) \bigr) ,
\]
or
\[
 0= \Big(T^2 h^{(-1)}_1(w)\Big) h_0(w) + \Big(T^1 h^{(-1)}_0(w)\Big) h_1(w) ,
\]
which gives us
\begin{eqnarray}
\nonumber
 h^{(-1)}_1(w) & = & -\Big( T^{-1} h^{(-1)}_0(w)\Big)\Big( T^{-2} h_1(w)\Big)\Big( T^{-2} h_0^{-1}(w)\Big)\\ \label{h1m1}
 & = & -Z h^{-1}_0(w-3) h^{-1}_0(w-2) .
\end{eqnarray}
Similarly for $j=2$
\begin{eqnarray*}
 0 & = & \opm_M^{\gamma-1} \bigl( T^2 h^{(-1)}_2(w) \bigr) \opm_M^{\gamma-1} \bigl( h_0(w) \bigr) +
 \opm_M^{\gamma-1} \bigl( T^1 h^{(-1)}_1(w) \bigr) \opm_M^{\gamma-1} \bigl( h_1(w) \bigr) \\
 & & + \opm_M^{\gamma-1} \bigl( h^{(-1)}_0(w) \bigr) \opm_M^{\gamma-1} \bigl( h_2(w) \bigr) ,
\end{eqnarray*}
or
\[
 0= \Big(T^2 h^{(-1)}_2(w)\Big) h_0(w) +\Big(T^1 h^{(-1)}_1(w)\Big) h_1(w) + h^{(-1)}_0(w) h_2(w) ,
\]
which together with (\ref{h0m1}) and (\ref{h1m1}) yields
\begin{eqnarray}
\nonumber
 h^{(-1)}_2(w) & = & - \left( Z T^{-1} h^{(-1)}_1(w) + E T^{-2} h^{(-1)}_0(w) \right) T^{-2} h^{-1}_0(w) \\ \label{h2m1}
 & = & Z^2 h^{-1}_0(w-4) h^{-1}_0(w-3) h^{-1}_0(w-2) - E h_0^{-1}(w-4) h^{-1}_0(w-2) .
\end{eqnarray}
Obviously, the recursive scheme can be easily continued up to arbitrary order. For our present work
we consider only terms up to $j=2$ which is sufficient to demonstrate asymptotic properties of the
Green operator. Using the spectral decomposition (\ref{sMAm1}), we finally obtain
\begin{gather}
\label{hm1i0}
 h^{(-1)}_0(w) = 2 \sum_{l=0}^\infty \frac{P_l}{(w+l-2)(w-l-3)}, \\ \label{hm1i1}
 h^{(-1)}_1(w) = -4Z \sum_{l=0}^\infty \frac{P_l}{(w+l-3)(w+l-2)(w-l-3)(w-l-4)}, \\ \label{hm1i2}
 h^{(-1)}_2(w) = \sum_{l=0}^\infty \left( \frac{8Z^2}{(w+l-3)(w-l-4)} -4E \right)
 \frac{P_l}{(w+l-4)(w+l-2)(w-l-3)(w-l-5)}.
\end{gather}
Because of $P_l P_{l'}=0$ for $l \neq l'$, the poles of $h^{(-1)}_1(w)$ and $h^{(-1)}_2(w)$ remain simple.

Next, we consider $G_I$ and $G_{II}$-type Green operators in more detail.
In the following let us set $\Gamma_\beta := \{ w \in \mathbb{C} : \Re w = \beta \}$ for any $\beta \in \mathbb{R}$.
The $G_I$ operator contains operators of the general form
\begin{eqnarray*}
 & & \omega' \opm_M^{\tilde{\gamma}-1} \bigl( h(w) \bigr) \tilde{\omega}'
 - \omega' \opm_M^{\gamma-1} \bigl( h(w) \bigr) \tilde{\omega}' \\
 & = & \left[ \omega' \opm_M^{\tilde{\gamma}-1} \bigl( h(w) \bigr) r^{\gamma - \tilde{\gamma}} \tilde{\omega}'
 - \omega' r^{\gamma - \tilde{\gamma}} \opm_M^{\tilde{\gamma}-1} \bigl( T^{-\gamma + \tilde{\gamma}} h(w) \bigr)
 \tilde{\omega}' \right] r^{-\gamma + \tilde{\gamma}}.
\end{eqnarray*}
 According to \cite[Proposition
2.3.69]{Schulze98}, the term in square brackets belongs to $C_G((S^2)^\wedge,\boldsymbol{g})$ with $\boldsymbol{g}=(\tilde{\gamma},\tilde{\gamma},(-\infty,0]).$ By repeating  the calculations in the proof of the proposition mentioned before, 
in our case $\gamma>\tilde{\gamma},$ the whole expression can be transformed into
\begin{eqnarray}
\nonumber
 \lefteqn{\omega' \opm_M^{\tilde{\gamma}-1} \bigl( h(w) \bigr) v
 - \omega' r^{\gamma - \tilde{\gamma}} \opm_M^{\tilde{\gamma}-1} \bigl( T^{-\gamma + \tilde{\gamma}} h(w) \bigr)
 r^{-\gamma + \tilde{\gamma}} v} \\ \label{GI}
 & & \hspace{4cm} = \frac{1}{2\pi i} \omega' \int_{\Gamma^{-1}_{\frac{3}{2}-\gamma} \cup \Gamma_{\frac{3}{2}-\tilde{\gamma}}}
 r^{-w} h(w) \bigl( M_{\tilde{\gamma}-1} v \bigr)(w) dw ,
\end{eqnarray}
with $v:= \tilde{\omega}' u$ and $u\ \in {\cal
K}^{s,\gamma}((S^2)^\wedge)$. Since $\bigl( M_{\tilde{\gamma}-1}v
\bigr)(w)$ belongs to ${\cal A} \bigl( \frac{3}{2}-\gamma < \Re w <
\frac{3}{2}- \tilde{\gamma}, C^\infty(S^2) \bigr)$, we can apply
Cauchy's theorem to replace the integral contour by a closed smooth
curve $S_{\gamma,\tilde{\gamma}} \subset \{w \in \mathbb{C}:
\frac{3}{2}-\gamma < \Re w < \frac{3}{2}- \tilde{\gamma} \}$ which
encloses all poles in the strip and compute the integral using
Cauchy's formula.

The $G_{II}$-type Green operators can be computed in a similar manner. The operators are of general form
\[
 \omega' \opm_M^{\gamma-1} \bigl( g(w) \bigr) \bigl( \omega'' -1 \bigr)
 \opm_M^{\gamma-1} \bigl( h(w) \bigr) \tilde{\omega} ,
\]
with $\omega'' := \tilde{\omega}' \omega$ and $g(w) \in
M^\mu_Q(S^2)$, $h(w) \in M^\nu_R(S^2)$ (cf.~Appendix
\ref{structurepara}) for some $\mu, \nu \in \mathbb{R}$ and discrete
asymptotics $R,Q$. Because of $\omega' \tilde{\omega}' = \omega'$
and $\omega \tilde{\omega} = \omega$ we can assume w.l.o.g.~that
$\supp \omega' \cap \supp (1-\omega'') = \emptyset$. It follows from
\cite[Lemma 2.3.73]{Schulze98} that this operator belongs to
$C_G((S^2)^\wedge,\boldsymbol{g})$ with $\boldsymbol{g}
=(\gamma,\gamma,(-\infty,0])$. Following the proof in
\cite{Schulze98} we choose a cut-off function $\hat{\omega}$ with
$(1-\omega'') (1-\hat{\omega}) = (1-\omega'')$ and observe that
\[
 (1-\hat{\omega}) \opm_M^{\gamma-1} \bigl( h(w) \bigr) \tilde{\omega} :
 {\cal K}^{s,\gamma}\bigl( (S^2)^\wedge \bigr) \longrightarrow \omega'' {\cal K}^{s-\nu,\gamma+N}\bigl( (S^2)^\wedge \bigr)
 + (1- \omega'') {\cal H}^{s-\nu,\gamma}\bigl( (S^2)^\wedge \bigr)
\]
is continuous for every $N \in \mathbb{N}$. Furthermore
\[
 \omega' \opm_M^{\gamma-1} \bigl( g(w) \bigr) \bigl( \omega'' -1 \bigr):
 \omega'' {\cal K}^{s-\nu,\gamma+N}\bigl( (S^2)^\wedge \bigr)
 + (1- \omega'') {\cal H}^{s-\nu,\gamma}\bigl( (S^2)^\wedge \bigr) \longrightarrow
 {\cal S}^{\gamma}_{P_k}\bigl( (S^2)^\wedge \bigr)
\]
is continuous, where the asymptotic type $P_k$ has weight data $(\gamma,(-k,0])$ with
$k \in \mathbb{N}$ sufficiently large for $N$.
It follows from these considerations that it is the mapping
\[
 \omega' \opm_M^{\gamma-1} \bigl( g(w) \bigr) \bigl( \omega'' -1 \bigr):
 {\cal H}^{s-\nu,\gamma}\bigl( (S^2)^\wedge \bigr) \longrightarrow
 {\cal S}^{\gamma}_{P_k}\bigl( (S^2)^\wedge \bigr) ,
\]
which determines the asymptotics of the Green operator. For $u \in {\cal H}^{s-\nu,\gamma}\bigl( (S^2)^\wedge \bigr)$
and every $\tilde{\gamma} > \gamma$ it follows that $\bigl( M (\omega'' -1) u \bigr)(w)$ belongs to
${\cal A} \bigl( \frac{3}{2}-\tilde{\gamma} < \Re w < \frac{3}{2}- \gamma, C^\infty(S^2) \bigr)$.
Because of
${\cal H}^{s-\nu,\gamma}\bigl( (S^2)^\wedge \bigr) \subset {\cal H}^{s-\nu,\tilde{\gamma}}\bigl( (S^2)^\wedge \bigr)$,
we can rearrange the operator into
\[
 \omega' \opm_M^{\gamma-1} \bigl( g(w) \bigr) ( \omega'' -1) =
 \omega' \left[ \opm_M^{\gamma-1} \bigl( g(w) \bigr) -
 \opm_M^{\tilde{\gamma}-1} \bigl( g(w) \bigr) \right] ( \omega'' -1) +
 \omega' \opm_M^{\tilde{\gamma}-1} \bigl( g(w) \bigr) ( \omega'' -1) ,
\]
where the last term maps ${\cal H}^{s-\nu,\tilde{\gamma}}\bigl( (S^2)^\wedge \bigr)$ into
${\cal K}^{\infty,\tilde{\gamma}}\bigl( (S^2)^\wedge \bigr)$ and therefore does not contribute
to the asymptotics for sufficiently large $\tilde{\gamma}$. The first term in square brackets can be
computed like in the case of the $G_I$ Green operator, i.e.,
\begin{eqnarray}
\nonumber
 \lefteqn{ \omega' \left[ \opm_M^{\gamma-1} \bigl( g(w) \bigr) -
 \opm_M^{\tilde{\gamma}-1} \bigl( g(w) \bigr) \right] ( \omega'' -1) (u)} \\ \label{GII}
 & & \hspace{4cm} = \frac{1}{2\pi i} \omega' \int_{\Gamma_{\frac{3}{2}-\gamma} \cup \Gamma^{-1}_{\frac{3}{2}-\tilde{\gamma}}}
 r^{-w} g(w) \bigl( M_{\gamma-1} ( \omega'' -1) u \bigr)(w) dw .
\end{eqnarray}

\paragraph{Proof of Theorem \ref{theorem1}}
\begin{proof}
The explicit expressions for the asymptotic behaviour of the Green
operator ${ G}$ can be easily calculated from (\ref{hm1i0}),
(\ref{hm1i1}), (\ref{hm1i2}) using Eqs.~(\ref{GI}) and (\ref{GII}).
The details of the calculation are given in Appendix
\ref{AppendixA}. Collecting individual terms order by order with
respect to powers of $r$ and the angular momentum projection
operators $P_l$, $l=0,1,2\ldots$, we obtain the asymptotic formula
(\ref{Ghydrogen}).
\end{proof}

\appendix
\vspace{1cm}
\noindent {\Large {\bf Appendices}}
\section{Calculation of Green operators}
\label{AppendixA}
In this appendix, we characterize $G_I$ and $G_{II}$-type Green operators.
\subsection{$G_I$-type Green operator}
We begin with the $G_I$ operator
\begin{eqnarray*}
 G_I & = & Z \sum_{i=0}^N r^{i+1} \left[ \omega' \opm_M^{\gamma-2} \bigl( T^1 h^{(-1)}_i \bigr) \omega''
 - \omega' \opm_M^{\gamma-1} \bigl( T^1 h^{(-1)}_i \bigr) \omega'' \right] \\
 & & +E \sum_{i=0}^N r^{i+2} \left[ \omega' \opm_M^{\gamma-3} \bigl( h^{(-1)}_i \bigr) \omega''
 - \omega' \opm_M^{\gamma-1} \bigl( h^{(-1)}_i \bigr) \omega'' \right] ,
\end{eqnarray*}
where we have computed contributions up to $N=2$.

Using (\ref{hm1i0}) and (\ref{GI}), we obtain for $i=0$ with $v := \omega'' u$
\begin{eqnarray*}
 \lefteqn{Z r \left[ \omega' \opm_M^{\gamma-2} \bigl( T^1 h^{(-1)}_0 \bigr)
 - \omega' \opm_M^{\gamma-1} \bigl( T^1 h^{(-1)}_0 \bigr) \right](v)} \\
 & & \hspace{3cm} = Zr \omega' \frac{1}{2\pi i} \int_{\Gamma^{-1}_{\frac{3}{2}-\gamma} \cup \Gamma_{\frac{5}{2}-\gamma}}
 r^{-w} T^1 h^{(-1)}_0(w) \bigl( M_{\gamma -2} v \bigr)(w) dw \\
 & & \hspace{3cm} = -2Z \omega' P_0 \bigl( M v \bigr)(1) ,
\end{eqnarray*}
and
\begin{eqnarray*}
 \lefteqn{E r^2 \left[ \omega' \opm_M^{\gamma-3} \bigl( h^{(-1)}_0 \bigr)
 - \omega' \opm_M^{\gamma-1} \bigl( h^{(-1)}_0 \bigr) \right](v)} \\
 & & \hspace{3cm} = Er^2 \omega' \frac{1}{2\pi i} \int_{\Gamma^{-1}_{\frac{3}{2}-\gamma} \cup \Gamma_{\frac{7}{2}-\gamma}}
 r^{-w} h^{(-1)}_0(w) \bigl( M_{\gamma -3} v \bigr)(w) dw \\
 & & \hspace{3cm} = -E\omega' \left[ 2 P_0 \bigl( M v \bigr)(2)
 + \tfrac{2}{3} r  P_1 \bigl( M v \bigr)(1) \right] .
\end{eqnarray*}

For $i=1$, we get with (\ref{hm1i1})
\begin{eqnarray*}
 \lefteqn{Z r^2 \left[ \omega' \opm_M^{\gamma-2} \bigl( T^1 h^{(-1)}_1 \bigr)
 - \omega' \opm_M^{\gamma-1} \bigl( T^1 h^{(-1)}_1 \bigr) \right](v)} \\
 & & \hspace{3cm} = Zr^2 \omega' \frac{1}{2\pi i} \int_{\Gamma^{-1}_{\frac{3}{2}-\gamma} \cup \Gamma_{\frac{5}{2}-\gamma}}
 r^{-w} T^1 h^{(-1)}_1(w) \bigl( M_{\gamma -2} v \bigr)(w) dw \\
 & & \hspace{3cm} = -Z^2 \omega' \left[ -2 r P_0 \bigl( M v \bigr)(1)
 + \tfrac{2}{3} r P_1 \bigl( M v \bigr)(1) \right] ,
\end{eqnarray*}
and
\begin{eqnarray*}
 \lefteqn{E r^3 \left[ \omega' \opm_M^{\gamma-3} \bigl( h^{(-1)}_1 \bigr)
 - \omega' \opm_M^{\gamma-1} \bigl( h^{(-1)}_1 \bigr) \right](v)} \\
 & & \hspace{3cm} = Er^3 \omega' \frac{1}{2\pi i} \int_{\Gamma^{-1}_{\frac{3}{2}-\gamma} \cup \Gamma_{\frac{7}{2}-\gamma}}
 r^{-w} h^{(-1)}_2(w) \bigl( M_{\gamma -3} v \bigr)(w) dw \\
 & & \hspace{3cm} = -EZ \omega' \left[ -2 r P_0 \bigl( M v \bigr)(2)
 + \tfrac{2}{3} r P_1 \bigl( M v \bigr)(2)
 - \tfrac{1}{3} r^2 P_1 \bigl( M v \bigr)(1)
 + \tfrac{1}{5} r^2 P_2 \bigl( M v \bigr)(1) \right] .
\end{eqnarray*}

Finally, for $i=2$ we get with (\ref{hm1i2})
\begin{eqnarray*}
 \lefteqn{Z r^3 \left[ \omega' \opm_M^{\gamma-2} \bigl( T^1 h^{(-1)}_2 \bigr)
 - \omega' \opm_M^{\gamma-1} \bigl( T^1 h^{(-1)}_2 \bigr) \right](v)} \\
 & & \hspace{3cm} = Zr^3 \omega' \frac{1}{2\pi i} \int_{\Gamma^{-1}_{\frac{3}{2}-\gamma} \cup \Gamma_{\frac{5}{2}-\gamma}}
 r^{-w} T^1 h^{(-1)}_2(w) \bigl( M_{\gamma -2} v \bigr)(w) dw \\
 & & \hspace{3cm} = -Z \omega' \biggl[ \tfrac{2}{3} r^2 \bigl( Z^2 -E \bigr) P_0 \bigl( M v \bigr)(1)
 - \tfrac{1}{3} r^2 Z^2 P_1 \bigl( M v \bigr)(1)
 + \tfrac{1}{15} r^2 \bigl( Z^2 +2E \bigr) P_2 \bigl( M v \bigr)(1) \biggr] ,
\end{eqnarray*}
and
\begin{eqnarray*}
 \lefteqn{E r^4 \left[ \omega' \opm_M^{\gamma-3} \bigl( h^{(-1)}_2 \bigr)
 - \omega' \opm_M^{\gamma-1} \bigl( h^{(-1)}_2 \bigr) \right](v)} \\
 & & \hspace{3cm} = Er^4 \omega' \frac{1}{2\pi i} \int_{\Gamma^{-1}_{\frac{3}{2}-\gamma} \cup \Gamma_{\frac{7}{2}-\gamma}}
 r^{-w} h^{(-1)}_2(w) \bigl( M_{\gamma -3} v \bigr)(w) dw \\
 & & \hspace{3cm} = -E \omega' \biggl[ \tfrac{2}{3} r^2 \bigl( Z^2 -E \bigr) P_0 \bigl( M v \bigr)(2)
 - \tfrac{1}{3} r^2 Z^2 P_1 \bigl( M v \bigr)(2)
 + \tfrac{1}{15} r^2 \bigl( Z^2 +2E \bigr) P_2 \bigl( M v \bigr)(2) \biggr. \\
 & & \hspace{3cm} \,\,\,\,\,\, \biggl. + \tfrac{1}{15} r^3 \bigl( Z^2 -2E \bigr) P_1 \bigl( M v \bigr)(1)
 - \tfrac{1}{15} r^3 Z^2 P_2 \bigl( M v \bigr)(1)
 + \tfrac{2}{35} r^3 \bigl( \tfrac{1}{3}Z^2 +E \bigr) P_3 \bigl( M v \bigr)(1) \biggr] .
\end{eqnarray*}

\subsection{$G_{II}$-type Green operator}
According to our discussion in Section \ref{parametrix}, the $G_{II}$-type Green operator is given by
\begin{eqnarray*}
 G_{II} & = & \sum_{i=0}^N r^i \omega' \left[ \opm_M^{\gamma-1} \bigl( T^2 h^{(-1)}_i \bigr) -
 \opm_M^{\tilde{\gamma}-1} \bigl( T^2 h^{(-1)}_i \bigr)  \right] \bigl( \omega''
 -1 \bigr) \opm_M^{\gamma-1} \bigl( h_0 \bigr) \tilde{\omega} \\
 & & + Z \sum_{i=0}^N r^{i+1} \omega' \left[ \opm_M^{\gamma-1} \bigl( T^1 h^{(-1)}_i \bigr) -
 \opm_M^{\tilde{\gamma}-1} \bigl( T^1 h^{(-1)}_i \bigr) \right] \bigl( \omega''
 -1 \bigr) \tilde{\omega} \\
 & & + E \sum_{i=0}^N r^{i+2} \omega' \left[ \opm_M^{\gamma-1} \bigl( h^{(-1)}_i \bigr) -
 \opm_M^{\tilde{\gamma}-1} \bigl( h^{(-1)}_i \bigr) \right] \bigl( \omega''
 -1 \bigr) \tilde{\omega} \cdots ,
\end{eqnarray*}
where the dots indicate a remainder which does not contribute to the asymptotics. The parameter $\tilde{\gamma} > \gamma$ is
chosen sufficiently large such that all poles belong to the intervall
$[\frac{3}{2}-\tilde{\gamma}, \frac{3}{2}-\gamma]$.

Using (\ref{hm1i0}) and (\ref{GII}), we obtain for $i=0$ with $v := \tilde{\omega} u$
\begin{eqnarray*}
 \lefteqn{ \omega' \left[ \opm_M^{\gamma-1} \bigl( T^2 h^{(-1)}_0 \bigr) -
 \opm_M^{\tilde{\gamma}-1} \bigl( T^2 h^{(-1)}_0 \bigr) \right] ( \omega'' -1)
 \opm_M^{\gamma-1} \bigl( h_0 \bigr) (v)} \\
 & & \hspace{3cm} = \omega' \frac{1}{2\pi i} \int_{\Gamma_{\frac{3}{2}-\gamma} \cup \Gamma^{-1}_{\frac{3}{2}-\tilde{\gamma}}}
 r^{-w} T^2 h^{(-1)}_0(w) \left( M_{\gamma-1} ( \omega'' -1) \opm_M^{\gamma-1} \bigl( h_0 \bigr) (v) \right)(w) dw \\
 & & \hspace{3cm} = -\omega' \sum_{l=0}^L \frac{2}{2l+1} r^l P_l
 \left( M_{\gamma-1} ( \omega'' -1) \opm_M^{\gamma-1} \bigl( h_0 \bigr) (v) \right)(-l) ,
\end{eqnarray*}

\begin{eqnarray*}
 \lefteqn{ Zr \omega' \left[ \opm_M^{\gamma-1} \bigl( T^1 h^{(-1)}_0 \bigr) -
 \opm_M^{\tilde{\gamma}-1} \bigl( T^1 h^{(-1)}_0 \bigr) \right] ( \omega'' -1) (v)} \\
 & & \hspace{3cm} = Z r \omega' \frac{1}{2\pi i} \int_{\Gamma_{\frac{3}{2}-\gamma} \cup \Gamma^{-1}_{\frac{3}{2}-\tilde{\gamma}}}
 r^{-w} T^1 h^{(-1)}_0(w) \left( M_{\gamma-1} ( \omega'' -1) (v) \right)(w) dw \\
 & & \hspace{3cm} = -Z \omega' \sum_{l=1}^L \frac{2}{2l+1} r^l P_l
 \left( M_{\gamma-1} ( \omega'' -1) (v) \right)(-l+1) ,
\end{eqnarray*}

\begin{eqnarray*}
 \lefteqn{ E r^2 \omega' \left[ \opm_M^{\gamma-1} \bigl( h^{(-1)}_0 \bigr) -
 \opm_M^{\tilde{\gamma}-1} \bigl( h^{(-1)}_0 \bigr) \right] ( \omega'' -1) (v)} \\
 & & \hspace{3cm} = E r^2 \omega' \frac{1}{2\pi i} \int_{\Gamma_{\frac{3}{2}-\gamma} \cup \Gamma^{-1}_{\frac{3}{2}-\tilde{\gamma}}}
 r^{-w} h^{(-1)}_0(w) \left( M_{\gamma-1} ( \omega'' -1) (v) \right)(w) dw \\
 & & \hspace{3cm} = -E \omega' \sum_{l=2}^L \frac{2}{2l+1} r^l P_l
 \left( M_{\gamma-1} ( \omega'' -1) (v) \right)(-l+2) .
\end{eqnarray*}

For $i=1$, we get with (\ref{hm1i1})
\begin{eqnarray*}
 \lefteqn{ r \omega' \left[ \opm_M^{\gamma-1} \bigl( T^2 h^{(-1)}_1 \bigr) -
 \opm_M^{\tilde{\gamma}-1} \bigl( T^2 h^{(-1)}_1 \bigr) \right] ( \omega'' -1)
 \opm_M^{\gamma-1} \bigl( h_0 \bigr) (v)} \\
 & & \hspace{3cm} = r \omega' \frac{1}{2\pi i} \int_{\Gamma_{\frac{3}{2}-\gamma} \cup \Gamma^{-1}_{\frac{3}{2}-\tilde{\gamma}}}
 r^{-w} T^2 h^{(-1)}_1(w) \left( M_{\gamma-1} ( \omega'' -1) \opm_M^{\gamma-1} \bigl( h_0 \bigr) (v) \right)(w) dw \\
 & & \hspace{3cm} = Z \omega' \sum_{l=0}^L \frac{2}{(l+1)(2l+1)} r^{l+1} P_l
 \left( M_{\gamma-1} ( \omega'' -1) \opm_M^{\gamma-1} \bigl( h_0 \bigr) (v) \right)(-l) \\
 & & \hspace{3cm} \,\,\,\,\,\, -Z \omega' \sum_{l=1}^L \frac{2}{l(2l+1)} r^l P_l
 \left( M_{\gamma-1} ( \omega'' -1) \opm_M^{\gamma-1} \bigl( h_0 \bigr) (v) \right)(-l+1) ,
\end{eqnarray*}

\begin{eqnarray*}
 \lefteqn{ Zr^2 \omega' \left[ \opm_M^{\gamma-1} \bigl( T^1 h^{(-1)}_1 \bigr) -
 \opm_M^{\tilde{\gamma}-1} \bigl( T^1 h^{(-1)}_1 \bigr) \right] ( \omega'' -1) (v)} \\
 & & \hspace{3cm} = Z r^2 \omega' \frac{1}{2\pi i} \int_{\Gamma_{\frac{3}{2}-\gamma} \cup \Gamma^{-1}_{\frac{3}{2}-\tilde{\gamma}}}
 r^{-w} T^1 h^{(-1)}_1(w) \left( M_{\gamma-1} ( \omega'' -1) (v) \right)(w) dw \\
 & & \hspace{3cm} = Z^2 \omega' \sum_{l=1}^L \frac{2}{(l+1)(2l+1)} r^{l+1} P_l
 \left( M_{\gamma-1} ( \omega'' -1) (v) \right)(-l+1) \\
 & & \hspace{3cm} \,\,\,\,\,\, -Z^2 \omega' \sum_{l=2}^L \frac{2}{l(2l+1)} r^l P_l
 \left( M_{\gamma-1} ( \omega'' -1) (v) \right)(-l+2) ,
\end{eqnarray*}

\begin{eqnarray*}
 \lefteqn{ Er^3 \omega' \left[ \opm_M^{\gamma-1} \bigl( h^{(-1)}_1 \bigr) -
 \opm_M^{\tilde{\gamma}-1} \bigl( h^{(-1)}_1 \bigr) \right] ( \omega'' -1) (v)} \\
 & & \hspace{3cm} = E r^3 \omega' \frac{1}{2\pi i} \int_{\Gamma_{\frac{3}{2}-\gamma} \cup \Gamma^{-1}_{\frac{3}{2}-\tilde{\gamma}}}
 r^{-w} h^{(-1)}_1(w) \left( M_{\gamma-1} ( \omega'' -1) (v) \right)(w) dw \\
 & & \hspace{3cm} = EZ \omega' \sum_{l=2}^L \frac{2}{(l+1)(2l+1)} r^{l+1} P_l
 \left( M_{\gamma-1} ( \omega'' -1) (v) \right)(-l+2) \\
 & & \hspace{3cm} \,\,\,\,\,\, - EZ \omega' \sum_{l=3}^L \frac{2}{l(2l+1)} r^l P_l
 \left( M_{\gamma-1} ( \omega'' -1) (v) \right)(-l+3) .
\end{eqnarray*}

Finally for $i=2$, we get with (\ref{hm1i2})
\begin{eqnarray*}
 \lefteqn{ r^2 \omega' \left[ \opm_M^{\gamma-1} \bigl( T^2 h^{(-1)}_2 \bigr) -
 \opm_M^{\tilde{\gamma}-1} \bigl( T^2 h^{(-1)}_2 \bigr) \right] ( \omega'' -1)
 \opm_M^{\gamma-1} \bigl( h_0 \bigr) (v)} \\
 & & \hspace{3cm} = r^2 \omega' \frac{1}{2\pi i} \int_{\Gamma_{\frac{3}{2}-\gamma} \cup \Gamma^{-1}_{\frac{3}{2}-\tilde{\gamma}}}
 r^{-w} T^2 h^{(-1)}_2(w) \left( M_{\gamma-1} ( \omega'' -1) \opm_M^{\gamma-1} \bigl( h_0 \bigr) (v) \right)(w) dw \\
 & & \hspace{3cm} = -\omega' \sum_{l=0}^L \left( \frac{Z^2}{l+1} -E \right) \frac{2}{(2l+1)(2l+3)} r^{l+2} P_l
 \left( M_{\gamma-1} ( \omega'' -1) \opm_M^{\gamma-1} \bigl( h_0 \bigr) (v) \right)(-l) \\
 & & \hspace{3cm} \,\,\,\,\,\, + Z^2 \omega' \sum_{l=1}^L \frac{2}{l(l+1)(2l+1)} r^{l+1} P_l
 \left( M_{\gamma-1} ( \omega'' -1) \opm_M^{\gamma-1} \bigl( h_0 \bigr) (v) \right)(-l+1) \\
 & & \hspace{3cm} \,\,\,\,\,\, - \omega' \sum_{l=2}^L \left( \frac{Z^2}{l} +E \right) \frac{2}{(2l-1)(2l+1)} r^l P_l
 \left( M_{\gamma-1} ( \omega'' -1) \opm_M^{\gamma-1} \bigl( h_0 \bigr) (v) \right)(-l+2) ,
\end{eqnarray*}

\begin{eqnarray*}
 \lefteqn{ Zr^3 \omega' \left[ \opm_M^{\gamma-1} \bigl( T^1 h^{(-1)}_2 \bigr) -
 \opm_M^{\tilde{\gamma}-1} \bigl( T^1 h^{(-1)}_2 \bigr) \right] ( \omega'' -1) (v)} \\
 & & \hspace{3cm} = Z r^3 \omega' \frac{1}{2\pi i} \int_{\Gamma_{\frac{3}{2}-\gamma} \cup \Gamma^{-1}_{\frac{3}{2}-\tilde{\gamma}}}
 r^{-w} T^1 h^{(-1)}_2(w) \left( M_{\gamma-1} ( \omega'' -1) (v) \right)(w) dw \\
 & & \hspace{3cm} = -Z \omega' \sum_{l=1}^L \left( \frac{Z^2}{l+1} -E \right) \frac{2}{(2l+1)(2l+3)} r^{l+2} P_l
 \left( M_{\gamma-1} ( \omega'' -1) (v) \right)(-l+1) \\
 & & \hspace{3cm} \,\,\,\,\,\, + Z^3 \omega' \sum_{l=2}^L \frac{2}{l(l+1)(2l+1)} r^{l+1} P_l
 \left( M_{\gamma-1} ( \omega'' -1) (v) \right)(-l+2) \\
 & & \hspace{3cm} \,\,\,\,\,\, - Z \omega' \sum_{l=3}^L \left( \frac{Z^2}{l} +E \right) \frac{2}{(2l-1)(2l+1)} r^l P_l
 \left( M_{\gamma-1} ( \omega'' -1) (v) \right)(-l+3) ,
\end{eqnarray*}

\begin{eqnarray*}
 \lefteqn{ Er^4 \omega' \left[ \opm_M^{\gamma-1} \bigl( h^{(-1)}_2 \bigr) -
 \opm_M^{\tilde{\gamma}-1} \bigl( h^{(-1)}_2 \bigr) \right] ( \omega'' -1) (v)} \\
 & & \hspace{3cm} = E r^4 \omega' \frac{1}{2\pi i} \int_{\Gamma_{\frac{3}{2}-\gamma} \cup \Gamma^{-1}_{\frac{3}{2}-\tilde{\gamma}}}
 r^{-w} h^{(-1)}_2(w) \left( M_{\gamma-1} ( \omega'' -1) (v) \right)(w) dw \\
 & & \hspace{3cm} = -E \omega' \sum_{l=2}^L \left( \frac{Z^2}{l+1} -E \right) \frac{2}{(2l+1)(2l+3)} r^{l+2} P_l
 \left( M_{\gamma-1} ( \omega'' -1) (v) \right)(-l+2) \\
 & & \hspace{3cm} \,\,\,\,\,\, +EZ^2 \omega' \sum_{l=3}^L \frac{2}{l(l+1)(2l+1)} r^{l+1} P_l
 \left( M_{\gamma-1} ( \omega'' -1) (v) \right)(-l+3) \\
 & & \hspace{3cm} \,\,\,\,\,\, -E \omega' \sum_{l=4}^L \left( \frac{Z^2}{l} +E \right) \frac{2}{(2l-1)(2l+1)} r^l P_l
 \left( M_{\gamma-1} ( \omega'' -1) (v) \right)(-l+4) .
\end{eqnarray*}

\section{Tools for operators on a cone}
\label{AppendixB}
We complete here some details on the cone algebra that belong to the technical background of this paper.

\subsection{Mellin pseudo-differential operators on a cone}
\label{Mellin}
Before we start our discussion of Mellin pseudo-differential operators, let
us briefly recall the definition of ${\cal K}^{s,\gamma}(X^\wedge)$-spaces
for the case of $X=S^n$, the unit sphere in $\mathbb{R}^{1+n}$.
By the identification between $\mathbb{R}^{1+n} \setminus \{0\}$ and $X^\wedge$
via polar coordinates $\tilde{x} \rightarrow (r,x)$ we get
\[
 {\cal K}^{s,\gamma}(X^\wedge) = \omega {\cal H}^{s,\gamma}(X^\wedge) +(1-\omega) H^s(\mathbb{R}^{1+n}) .
\]
Here ${\cal H}^{s,\gamma}(X^\wedge) = r^\gamma {\cal H}^{s,0}(X^\wedge)$, and ${\cal H}^{s,0}(X^\wedge)$
for $s \in \mathbb{N}_0$ is defined to be the set of all $u(r,x) \in r^{-n/2} L^2(\mathbb{R}_+ \times X)$
such that $(r \partial_r)^jDu \in r^{-n/2} L^2(\mathbb{R}_+ \times X)$ for all $D \in \mbox{Diff}^{s-j}(X)$,
$0 \leq j \leq s$. The definition for $s \in \mathbb{R}$ in general follows by duality and complex interpolation.

In the definition of pseudo-differential operators on manifolds with conical singularity it is natural
to use the Mellin transform in the direction of the cone variable $r$.
Let us set $\Gamma_\beta := \{ w \in \mathbb{C} : \Re w = \beta \}$ for any real $\beta$.
We systematically employ the Mellin transform
\[
 \bigl( Mu \bigr) (w) = \int_0^\infty r^w u(r) \frac{dr}{r} ,
\]
which is for $u \in C^\infty_0(\mathbb{R}_+)$ an entire function in $w \in \mathbb{C},$ where
$Mu|_{\Gamma_\beta} \in {\cal S}(\Gamma_\beta)$ for every $\beta \in \mathbb{R}$, uniformly in
compact $\beta$-intervals. $M_\gamma u := Mu|_{\Gamma_{\frac{1}{2}-\gamma}}$ is called the weighted Mellin transform
of weight $\gamma \in \mathbb{R}$. We also admit functions/distributions in $r \in \mathbb{R}_+$ with values
in a Fr\'echet vector space, e.g., $C^\infty_0 \bigl( \mathbb{R}_+, C^\infty(X) \bigr)$. Then
$M_\gamma$, first applied on elements with compact support in $r \in \mathbb{R}_+$ extends to more general spaces.
For instance, when we define $\hat{H}^s(\Gamma_\beta \times X)$ to be the Fourier transform along $\Gamma_\beta$
of the space $H^s(\Gamma_\beta \times X)$ (the usual cylindrical Sobolev space of smoothness $s \in \mathbb{R}$)
then
\[
 M_{\gamma-\frac{n}{2}} : \omega {\cal K}^{s,\gamma}(X^\wedge) \rightarrow
 \hat{H}^s(\Gamma_{\frac{n+1}{2}-\gamma} \times X)
\]
is continuous for every $s \in \mathbb{R}$.

After these preparatory remarks, we can now introduce the concept of Mellin pseudo-differential operators on $X^\wedge$.
Let $L^\mu_{cl}(X, \mathbb{R}^l)$ denote the space of classical parameter-dependent pseudo-differential
operators of order $\mu \in \mathbb{R}$ on $X$, with parameter $\lambda \in \mathbb{R}^l$.
This is a Fr\'echet space in a natural way. When $\mathbb{R}$ is identified with $\Gamma_\beta$ for
some $\beta$ we also write $L^\mu_{cl}(X, \Gamma_\beta)$. For any operator-valued amplitude function
$f(w) \in L^\mu_{cl}(X, \Gamma_{\frac{n+1}{2} -\gamma})$, we obtain a Mellin pseudo-differential operator
\[
 \opm_M^{\gamma - \frac{n}{2}}(f) \omega u := M^{-1}_{\gamma - \frac{n}{2}}
 \bigl( f(w) M_{\gamma - \frac{n}{2}} \omega u(w) \bigr) .
\]
Multiplied by another cut-off function $\tilde{\omega}$ from the left this gives us a continuous
operator
\[
 \tilde{\omega} \opm_M^{\gamma - \frac{n}{2}}(f) \omega :
 {\cal K}^{s,\gamma}(X^\wedge) \rightarrow {\cal K}^{s-\mu,\gamma}(X^\wedge)
\]
for every $s \in \mathbb{R}$. It is known that such a continuity also holds for
$f=f(r,w) \in C^\infty \bigl( \overline{\mathbb{R}}_+, L^\mu_{cl}(X, \Gamma_{\frac{n+1}{2} -\gamma}) \bigr)$.
An example is
\[
 f(r,w) = \sum_{j=0}^\mu a_j(r) w^j|_{\Gamma_{\frac{n+1}{2} -\gamma}} ,
\]
compare with the formula (\ref{Amu}). The corresponding polynomial in $w \in \mathbb{C}$ gives rise to a
holomorphic function with values in $\mbox{Diff}^\mu(X)$, smoothly depending on $r \in \overline{\mathbb{R}}_+$.
Let $M_{\cal O}^\mu (X)$ for any $\mu \in \mathbb{R}$ denote the subspace of all
$h \in {\cal A} \bigl( \mathbb{C}, L^\mu_{cl}(X) \bigr)$ such that $h|_{\Gamma_\beta} \in  L^\mu_{cl}(X, \Gamma_\beta)$
for every $\beta \in \mathbb{R}$, uniformly in compact $\beta$-intervals. The space
$M_{\cal O}^\mu (X)$ is Fr\'echet in a natural way, and it makes sense to talk about
$C^\infty \bigl( \overline{\mathbb{R}}_+, M_{\cal O}^\mu (X) \bigr)$. In particular, we have
$\sum_{j=0}^\mu a_j(r) w^j \in M_{\cal O}^\mu (X)$.

Let us denote by $\sigma_\psi(A)$ the homogeneous principal symbol of $A$ of oder $\mu$ as an invariantly defined
function on $T^\ast X^\wedge \setminus 0$. Then in the variables $(r,x)$ and covariables $(\rho,\xi)$, we
also have, what we call the reduced principal symbol,
\[
 \tilde{\sigma}_\psi(A)(r,x,\rho,\xi) = r^\mu \sigma_\psi(A)(r,x,r^{-1}\rho,\xi) .
\]
By ellipticity of $A$ with respect to $\sigma_\psi$ (briefly, $\sigma_\psi$-ellipticity), we understand
the condition $\sigma_\psi(A) \neq 0$ on $T^\ast X^\wedge \setminus 0$ and
$\tilde{\sigma}_\psi(A)(r,x,\rho,\xi) \neq 0$ for $(\rho,\xi) \neq 0$ up to $r=0$.
In this case, we always know that the principal conormal symbol
\begin{equation}
 \sigma_M(A)(w) := \sum_{j=0}^\mu a_j(0) w^j : H^s(X) \rightarrow H^{s-\mu}(X)
\label{sigmac}
\end{equation}
is a family of Fredholm operators, and that there is a sequence $(p_j)_{j \in \mathbb{Z}} \subset \mathbb{C}$
such that $|\Re p_j| \rightarrow \infty$ as $|j| \rightarrow \infty$, and (\ref{sigmac}) bijective for all
$w \in \mathbb{C} \setminus (p_j)_{j \in \mathbb{Z}}$. Now $A$ is said to be elliptic with respect
to a weight $\gamma \in \mathbb{R}$ if $(p_j)_{j \in \mathbb{Z}} \cap \Gamma_{\frac{n+1}{2} -\gamma} = \emptyset$.
Then (\ref{sigmac}) is a family of isomorphisms for all $w \in \Gamma_{\frac{n+1}{2} -\gamma}$.
Furthermore, if in addition $A$ is exit-elliptic (cf.~\cite{Schulze98}; some details are also outlined in \cite{FSS08})
which is a condition for $r \rightarrow \infty$ then
\begin{equation}
 A: {\cal K}^{s,\gamma}(X^\wedge) \rightarrow {\cal K}^{s-\mu,\gamma -\mu}(X^\wedge)
\label{Aelliptic}
\end{equation}
is a Fredholm operator for all $s \in \mathbb{R}$. Actually, it can be proved that the ellipticity conditions
are even necessary for the Fredholm property of (\ref{Aelliptic}).

It is a basic task (not only for the specific
application in the present paper) to assess asymptotics of solutions to an equation $Au=f$ for $r \rightarrow 0$
when $f \in {\cal K}^{s-\mu,\gamma -\mu}(X^\wedge)$ has an asymptotic expansion of the kind
\begin{equation}
 f(r,x) \sim \sum_j \sum_{k=0}^{m_j} d_{jk}(x) r^{-p_j} \log^kr
\label{fasymp}
\end{equation}
for $r \rightarrow 0$, with coefficients $d_{jk} \in C^\infty(X)$.
In the case of our application we have $f=0$ which is, of course, a special case.
It turns out that any solution $u(r,x)$ has an analogous asymptotic expansion as (\ref{fasymp}),
where the points $p \in (p_j)_{j \in \mathbb{Z}}$ from the system above with $\Re p < \frac{n+1}{2} -\gamma$
contribute further exponents to the resulting asymptotic expansion.
The generalities may be found in \cite{Schulze98}. Here we sketch a few details, mainly to explain notation that
played a role before.

\subsection{The structure of parametrices}
\label{structurepara}
Asymptotics of solutions may be regarded as an aspect of elliptic regularity, here in subspaces
${\cal K}^{s,\gamma}_P(X^\wedge) \subset {\cal K}^{s,\gamma}(X^\wedge)$ of functions $u(r,x)$ with
asymptotics of type $P$, i.e.,
\[
 u(r,x) \sim \sum_j \sum_{k=0}^{m_j} c_{jk}(x) r^{-p_j} \log^kr
\]
as $r \rightarrow 0$. The asymptotic type $P= \{(p_j,m_j)\}_{j \in \mathbb{N}_0} \subset \mathbb{C} \times \mathbb{N}_0$
is a sequence of pairs where $\Re p_j \rightarrow -\infty$ as $j \rightarrow \infty$, and $c_{jk} \in C^\infty(X)$.
In the notion of asymptotics, we may also include the coefficients and control the finite dimensional subspaces
of $C^\infty(X)$. For simplicity, we drop this aspect in the general description.

The weighted Mellin transform $M_{\gamma- \frac{n}{2}}, n=\mathrm{dim}\,X,$ transforms $\omega {\cal K}^{s,\gamma}_P(X^\wedge)$
for any cut-off function $\omega$ to a space of functions that are meromorphic in $w \in \mathbb{C}$ with
poles at $p_j$ of multiplicity $m_j+1$. This picture fits to Mellin operators with meromorphic symbols.
Given a sequence $Q := \{(q_j,n_j)\}_{j \in \mathbb{Z}} \subset \mathbb{C} \times \mathbb{N}_0$
with $|\Re q_j| \rightarrow \infty$ as $|j| \rightarrow \infty$, by $M^{-\infty}_Q(X)$ we denote a suitable
space of functions $f(w) \in {\cal A} \bigl( \mathbb{C} \setminus \pi_\mathbb{C} Q, L^{-\infty}(X) \bigr)$
for $\pi_\mathbb{C} Q := \{ q_j : j \in \mathbb{Z} \}$ which are meromorphic with poles at $q_j$ of multiplicity
$n_j+1$. We call $Q$ a Mellin asymptotic type.
Similarly as for functions we can also control the coefficients in the principal parts of the Laurent expansions
to be finite rank smoothing operators on $X$; however, again for simplicity, we ignore this aspect in our
discussion. The space $M^{-\infty}_Q(X)$ is defined by the condition that for every $f\in M^{-\infty}_Q(X)$ and  $\pi_\mathbb{C} Q$-excision
function $\chi$
\[
 \chi f |_{\Gamma_\beta} \in L^{-\infty}(X,\Gamma_\beta)
\]
for every $\beta \in \mathbb{R}$, uniformly in compact $\beta$-intervals. Moreover, we set
\begin{equation}
 M^\mu_Q(X) := M^\mu_{{\cal O}}(X) + M^{-\infty}_Q(X).
\label{MmuQ}
\end{equation}
A basic observation is that when $A$ is elliptic with respect to $\sigma_\psi$, there is a Mellin asymptotic
type $Q$ such that
\[
 \sigma_M(A)^{-1}(w) \in M^{-\mu}_Q(X) .
\]
This is just what we exploit in our paper. Another aspect of our method is to use the existence of a
parametrix $P$ of $A$ which is, roughly speaking, of the form
\[
 P = r^\mu \opm_M^{(\gamma - \mu) - \frac{n}{2}}(h) + \ \mbox{remainder}
\]
for a function $h(r,z) \in C^\infty \bigl( \overline{\mathbb{R}}_+, M^{-\mu}_Q(X) \bigr)$
with a  suitable Mellin asymptotic type $Q$. The remainder mainly consists of a so-called
Green operator. Such an operator is defined by its mapping properties, namely, to map
${\cal K}^{s-\mu,\gamma -\mu}(X^\wedge)$ to a space of the type ${\cal S}^\gamma_P(X^\wedge)$ for
a suitable asymptotic type $P$ (and a similar condition for the formal adjoint).
In the more ``orthodox'' cone algebra that may be found, e.g., in \cite{Schulze98}, one refers
to a decomposition of $h$ similarly as (\ref{MmuQ}), according to
\[
 C^\infty \bigl( \overline{\mathbb{R}}_+, M^{-\mu}_Q(X) \bigr) =
 C^\infty \bigl( \overline{\mathbb{R}}_+, M^{-\mu}_{{\cal O}}(X) \bigr) +
 C^\infty \bigl( \overline{\mathbb{R}}_+, M^{-\infty}_Q(X) \bigr) ,
\]
and localises the Mellin operators near $0$ by cut-off functions on both sides.
What we do here is to find the coefficients of the Taylor expansion of $h(r,w)$
for $r \rightarrow 0$ which belong to $M^{-\mu}_Q(X)$ and to compose operators
localized at $0$, cf.~(\ref{PA}), where we employ commutation properties between operators with
meromorphic Mellin symbols and powers of $r$, cf.~(\ref{gohar}), which leaves Green remainders. The latter ones are just
explicitly computed in our example.

\clearpage

\end{document}